\newtheorem{theorem}{Theorem}[section]
\newtheorem{proposition}{Proposition}[section]
\newtheorem{corollary}{Corollary}[section]
\newtheorem{alg}{Algorithm}[section]
\theoremstyle{remark}
\newtheorem{remark}{Remark}[section]
\newtheorem{example}{Example}[section]
\newcommand{\Fix}{\operatorname{Fix}}
\newcommand{\vect}{\operatorname{vec}}
\newcommand{\qede}{\hspace*{\fill}$\Diamond$\medskip}
\newcommand{\R}{\mathbb{R}}
\newcommand{\C}{\mathcal{C}}
\newcommand{\cb}{\cellcolor{black}}
\title{Recent Results on Douglas--Rachford Methods for Combinatorial Optimization Problems\footnote{All authors are at CARMA, University of Newcastle, Callaghan, NSW 2308, Australia.}}
\author{Francisco J. Arag\'on Artacho\thanks{Email: \url{francisco.aragon@ua.es}}
  \and Jonathan M. Borwein\thanks{Also Distinguished Professor, KAU Jeddah, SA. Email: \url{jon.borwein@gmail.com}}
  \and Matthew K. Tam\thanks{Email: \url{matthew.k.tam@gmail.com}}}
\begin{document}

\maketitle

\begin{abstract} We discuss recent positive experiences applying convex feasibility algorithms of Douglas--Rachford type to highly combinatorial and far from convex problems.
\end{abstract}

\section{Introduction}  Douglas--Rachford iterations, as defined in Section~\ref{sec:conv}, are moderately well understood when applied to finding a point in the intersection of two convex sets.  Over the past decade, they have proven very effective in some highly non-convex settings; even more surprisingly this is the case for some highly discrete problems.  In this paper we wish to advertise the use of Douglas--Rachford methods in such combinatorial settings.  The remainder of the paper is organized as follows.

In Section~\ref{sec:conv}, we recapitulate what is proven in the convex setting. This is followed, in Section~\ref{sec:prod}, by a review of the normal way of handling a (large) finite number of sets in the product space. In Section~\ref{sec:nonconv}, we reprise what is known  in the non-convex setting. Now there is less theory but significant and often positive experience. In Section~\ref{sec:app}, we turn to more detailed discussions of combinatorial applications before focusing, in Section~\ref{sec:sudoku}, on solving \emph{Sudoku puzzles}, and, in Section~\ref{sec:vis}, on solving \emph{Nonograms}. It is worth noting that both of these are NP-complete as decision problems. We complete the paper with various concluding remarks in Section~\ref{sec:conc}.

\section{Convex Douglas--Rachford methods}\label{sec:conv}
In this section we review what is known about the behaviour of Douglas--Rachford methods applied to a finite family of closed and convex sets.

\subsection{The classical Douglas--Rachford method}
The classical Douglas--Rachford scheme was originally introduced in connection with partial differential equations arising in heat conduction \cite{douglas1956numerical}, and convergence later proven as part of \cite{lions1979splitting}. Given two subsets $A,B$ of a Hilbert space, $\mathcal H$, the scheme iterates by repeatedly applying the \emph{$2$-set Douglas--Rachford operator},
 $$T_{A,B}:=\frac{I+R_BR_A}{2},$$
where $I$ denotes the \emph{identity mapping}, and $R_A(x)$ denotes the \emph{reflection} of a point $x\in \mathcal H$
in the set $A$. The reflection can be defined as
$$R_A(x):=2P_A(x)-x,$$
where $P_A(x)$ is the \emph{closest point projection} of the point $x$ onto the set $A$, that is,
$$P_A(x):=\left\{z\in A\colon \|x-z\|=\inf_{a\in A}\|x-a\|\right\}.$$
In general, the projection $P_A$ is a set-valued mapping. If $A$ is closed and convex, the projection is uniquely defined for every point in $\mathcal H$,  thus yielding a single-valued mapping (see e.g.~\cite[Th.~4.5.1]{BZ05}).

In the literature, the Douglas--Rachford scheme is also known as ``\emph{reflect--reflect--average}" \cite{borwein2011douglas}, and ``\emph{averaged alternating reflections (AAR)}" \cite{bauschke2004finding}.

Applied to closed and convex sets, convergence is well understood and can be explained by using the theory of (firmly) nonexpansive mappings.

\begin{theorem}[Douglas--Rachford, Lions--Mercier] \label{th:DR}
 Let $A,B\subseteq\mathcal H$ be closed and convex with nonempty intersection. For any $x_0\in\mathcal H$, set $x_{n+1}=T_{A,B}x_n$. Then $(x_n)$ converges weakly to a point $x$ such that $P_Ax\in A\cap B$.
\end{theorem}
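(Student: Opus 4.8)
The plan is to recognize $T_{A,B}$ as an \emph{averaged} (indeed firmly) nonexpansive self-map of $\mathcal H$ whose fixed-point set is nonempty, and then to invoke the standard weak-convergence machinery for such maps. First I would record the elementary but essential fact that, since $A$ and $B$ are closed and convex, the projections $P_A,P_B$ are firmly nonexpansive, so the reflections $R_A=2P_A-I$ and $R_B=2P_B-I$ are nonexpansive; hence $R_BR_A$ is nonexpansive and $T_{A,B}=\tfrac12(I+R_BR_A)$ is firmly nonexpansive. Concretely this gives, for all $x,y\in\mathcal H$,
$$\|T_{A,B}x-T_{A,B}y\|^2+\|(I-T_{A,B})x-(I-T_{A,B})y\|^2\le\|x-y\|^2.$$

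Second, I would identify the fixed points. Using $R_BR_B=I$ one sees that $x\in\Fix T_{A,B}$ iff $R_Ax=R_Bx$, i.e.\ iff $P_Ax=P_Bx$; calling this common point $q$, we have $q\in A\cap B$, so $P_A(\Fix T_{A,B})\subseteq A\cap B$. Conversely every $q\in A\cap B$ satisfies $R_Aq=R_Bq=q$, so $q\in\Fix T_{A,B}$; thus $A\cap B\subseteq\Fix T_{A,B}$, and in particular the hypothesis $A\cap B\ne\emptyset$ yields $\Fix T_{A,B}\ne\emptyset$. It therefore suffices to prove that $(x_n)$ converges weakly to \emph{some} point of $\Fix T_{A,B}$.

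Third, I would run the classical Fejér/Opial argument. Fixing any $p\in\Fix T_{A,B}$ and putting $y=p$ in the displayed inequality gives $\|x_{n+1}-p\|\le\|x_n-p\|$, so $(x_n)$ is bounded and, telescoping, $\sum_n\|x_n-x_{n+1}\|^2\le\|x_0-p\|^2<\infty$; hence $x_n-T_{A,B}x_n=x_n-x_{n+1}\to0$ strongly. If $x_{n_k}\rightharpoonup x$ is any weak cluster point, then demiclosedness of $I-T_{A,B}$ at $0$ (Browder's principle for the nonexpansive map $T_{A,B}$) forces $x\in\Fix T_{A,B}$. Since $(x_n)$ is Fejér monotone with respect to $\Fix T_{A,B}$ and all its weak cluster points lie in that set, Opial's lemma gives that the whole sequence converges weakly to a single $x\in\Fix T_{A,B}$. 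Then $P_Ax\in A\cap B$ by the second step, completing the proof.

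The main obstacle is precisely the last implication — upgrading ``every weak cluster point is a fixed point'' to weak convergence of the entire sequence. This is where genuine infinite-dimensional input is needed: the demiclosedness principle (itself proved via the Opial inequality for weak convergence) together with Opial's lemma on Fejér-monotone sequences. Everything else is routine operator algebra with firmly nonexpansive maps; it is worth flagging that strong convergence can fail here, so weak convergence is the sharp conclusion at this level of generality.
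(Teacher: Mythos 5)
The paper itself states this theorem without proof (it is quoted from Douglas--Rachford and Lions--Mercier, with the remark that convergence ``can be explained by using the theory of (firmly) nonexpansive mappings''), and your overall architecture --- firm nonexpansiveness of $T_{A,B}$, Fej\'er monotonicity, asymptotic regularity $x_n-x_{n+1}\to 0$, the demiclosedness principle, and Opial's lemma --- is exactly that standard route, and those steps are fine.

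There is, however, a genuine flaw in your identification of the fixed points. You assert $R_BR_B=I$ and conclude that $x\in\Fix T_{A,B}$ iff $R_Ax=R_Bx$ iff $P_Ax=P_Bx$. Reflections in a general closed convex set are \emph{not} involutions ($R_BR_B=I$ forces $B$ to be affine), and the claimed description of $\Fix T_{A,B}$ is false: in $\mathcal H=\R$ with $A=(-\infty,0]$, $B=[0,\infty)$, the point $x=1$ satisfies $R_Ax=-1$, $R_BR_Ax=1$, so $x\in\Fix T_{A,B}$, yet $P_Ax=0\neq 1=P_Bx$. This is consistent with Theorem~\ref{th:DR2}(ii) of the paper, which records $\Fix(T_{A,B})=(A\cap B)+N_{\overline{A-B}}(0)$, a set strictly larger than $A\cap B$ in general; so fixed points need not have coinciding projections. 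The correct (and equally short) argument for the fact you actually need is: if $R_BR_Ax=x$, put $y:=R_Ax$ and average, so that
$$P_By=\frac{y+R_By}{2}=\frac{R_Ax+x}{2}=P_Ax,$$
whence $P_Ax=P_BR_Ax\in B$ and therefore $P_Ax\in A\cap B$, i.e.\ $P_A(\Fix T_{A,B})\subseteq A\cap B$. Combined with your (correct) observation that $A\cap B\subseteq\Fix T_{A,B}$, this supplies everything your Fej\'er/Opial argument requires, and applying it to the weak limit $x$ gives $P_Ax\in A\cap B$ as claimed. So the proof is repaired by replacing that one step; as written, the step is wrong, not merely unjustified.
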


As part of their analysis of \emph{von Neumann's alternating projection method}, Bauschke and Borwein \cite{bauschke1993convergence} introduced the notion of the \emph{displacement vector}, $v$, and used the sets $E$ and $F$ to generalize $A\cap B$.
\begin{equation*}
 v:=P_{\overline{B-A}}(0),\quad E:=A\cap(B-v),\quad F:=(A+v)\cap B.
\end{equation*}
Note, if $A\cap B\neq\emptyset$ then $E=F=A\cap B$.

The same framework was utilized by Bauschke, Combettes and Luke \cite{bauschke2004finding} to analyze the Douglas--Rachford method.

  \begin{theorem}[Infeasible case {\cite[Th.~3.13]{bauschke2004finding}}] \label{th:DR2}
 Let $A,B\subseteq\mathcal H$ be closed and convex. For any $x_0\in\mathcal H$, set $x_{n+1}=T_{A,B}x_n$.
 Then the following hold.
 \begin{enumerate}[(i)]
  \item $x_{n+1}-x_n=P_BR_Ax_n-P_Ax_n\to v$ and $P_BP_Ax_n-P_Ax_n\to v$.
  \item If $A\cap B\neq\emptyset$ then $(x_n)$ converges weakly to a point in $$\Fix(T_{A,B})=(A\cap B)+N_{\overline{A-B}}(0);$$ otherwise, $\|x_n\|\to+\infty$.
  \item Exactly one of the following two alternatives holds.
   \begin{enumerate}[(a)]
    \item $E=\emptyset$, $\|P_Ax_n\|\to+\infty$, and $\|P_BP_Ax_n\|\to+\infty$.
    \item $E\neq\emptyset$, the sequences $(P_Ax_n)$ and $(P_BP_Ax_n)$ are bounded, and their weak cluster points belong to $E$ and $F$, respectively; in fact, the weak cluster points of
     \begin{equation}
      ((P_Ax_n,P_BR_Ax_n)) \text{ and }  ((P_Ax_n,P_BP_Ax_n)) \label{eq:bestapprox}
     \end{equation}
    are best approximation pairs relative to $(A,B)$.
   \end{enumerate}
 \end{enumerate}
 \end{theorem}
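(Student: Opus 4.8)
The plan is to recognise $T:=T_{A,B}$ as a \emph{firmly nonexpansive} operator and then to read the three assertions off the general asymptotic theory of such maps, using only elementary computations with projections and normal cones to translate that theory into the language of $A$, $B$, $v$, $E$ and $F$.

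First I would record the identities that drive everything. Reflections in closed convex sets are nonexpansive, so $R_BR_A$ is nonexpansive and $T=(I+R_BR_A)/2$ is firmly nonexpansive; in particular $\|Tx-Ty\|^2+\|(I-T)x-(I-T)y\|^2\le\|x-y\|^2$. Writing $P_Ax=\tfrac12(x+R_Ax)$ and $P_BR_Ax=\tfrac12(R_Ax+R_BR_Ax)$ yields $(T-I)x=P_BR_Ax-P_Ax$, which is the first displacement formula in~(i) and shows $(T-I)x\in B-A$ for all $x$, hence $\overline{\operatorname{ran}(T-I)}\subseteq\overline{B-A}$. Feeding $x_{n+1}=Tx_n$ into the firm nonexpansiveness inequality gives $\|(T-I)x_{n+1}\|\le\|(T-I)x_n\|$, so the displacement norms are nonincreasing. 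The substantive input, which I would quote from the asymptotic theory of nonexpansive mappings (Baillon--Bruck--Reich; Pazy; and, in the projection setting, \cite{bauschke1993convergence,bauschke2004finding}) together with the identification $\overline{\operatorname{ran}(T-I)}=\overline{B-A}$ (the inclusion $\subseteq$ is immediate; the reverse needs a short approximation argument), is that $x_{n+1}-x_n=(T-I)x_n$ converges \emph{strongly} to the minimal-norm element of $\overline{B-A}$, i.e.\ to $v$. The second limit in~(i) then comes for free: $P_BP_Ax_n-P_Ax_n\in B-A$ and $P_BP_Ax_n\in B$ give $\|v\|\le\|P_BP_Ax_n-P_Ax_n\|\le\|P_BR_Ax_n-P_Ax_n\|\to\|v\|$, so $(P_BP_Ax_n-P_Ax_n)$ is a norm-minimising sequence in the closed convex set $\overline{B-A}$ and hence converges strongly to $v$.

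For~(ii), if $A\cap B\ne\emptyset$ I would pick $p\in A\cap B$: then $P_Ap=p$, $R_Ap=p$, $P_BR_Ap=p$, so $Tp=p$ and $\Fix(T)\ne\emptyset$; moreover $0\in B-A$ forces $v=0$, i.e.\ asymptotic regularity $x_{n+1}-x_n\to 0$. Since $T$ is nonexpansive, $I-T$ is demiclosed at $0$, so every weak cluster point of $(x_n)$ is a fixed point; as $(x_n)$ is Fej\'er monotone with respect to $\Fix(T)$, Opial's lemma gives $x_n\rightharpoonup\bar x\in\Fix(T)$. Solving $Tx=x$ directly shows $Tx=x\iff P_BR_Ax=P_Ax$; putting $p:=P_Ax$ one gets $p\in A\cap B$ (in particular $P_A\bar x\in A\cap B$, recovering Theorem~\ref{th:DR}), $x-p\in N_A(p)$ and $-(x-p)=R_Ax-p\in N_B(p)$, and a one-line computation with the defining variational inequalities shows $N_A(p)\cap(-N_B(p))=N_{\overline{A-B}}(0)$ for every $p\in A\cap B$; hence $\Fix(T)=(A\cap B)+N_{\overline{A-B}}(0)$. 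If instead $A\cap B=\emptyset$ the same computation forces $\Fix(T)=\emptyset$, and then $(x_n)$ can have no bounded subsequence (when $v\ne0$ this is clear from $x_{n+1}-x_n\to v$; when $v=0$, a bounded subsequence would, via asymptotic regularity and demiclosedness, produce a fixed point), so $\|x_n\|\to\infty$.

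For~(iii) the dichotomy is $E=\emptyset$ versus $E\ne\emptyset$, and the key observation in the latter case is that the iteration \emph{started in $E$ is a pure translation by $v$}. For $e\in E$ set $f:=e+v\in B$; the variational inequality for $v=P_{\overline{B-A}}(0)$, namely $\langle v,(b-a)-v\rangle\ge 0$ for all $a\in A$, $b\in B$, gives $v\in N_A(e)$ (take $b=f$) and $-v\in N_B(f)$ (take $a=e$), whence $P_A(e+nv)=e$ and $P_B(e-nv)=f$ for $n\ge 0$, and therefore $T(e+nv)=e+(n+1)v$; by induction $T^ne=e+nv$. Nonexpansiveness of $T$ now gives $\|x_n-(e+nv)\|=\|T^nx_0-T^ne\|\le\|x_0-e\|=:M$, so $x_n=e+nv+r_n$ with $\|r_n\|\le M$; since $P_A$ is nonexpansive and $P_A(e+nv)=e$, the shadow $P_Ax_n$ stays within $M$ of $e$, and $P_BP_Ax_n$ and $P_BR_Ax_n$ are bounded likewise. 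If $P_Ax_{n_k}\rightharpoonup a^*$ then $a^*\in A$, and by~(i) $P_BR_Ax_{n_k}\rightharpoonup a^*+v\in B$, so $a^*\in A\cap(B-v)=E$; using the second limit in~(i), $P_BP_Ax_{n_k}\rightharpoonup a^*+v\in(A+v)\cap B=F$, and $\|(a^*+v)-a^*\|=\|v\|=d(A,B)$, so the pairs in~\eqref{eq:bestapprox} cluster at best approximation pairs. In the remaining case $E=\emptyset$, the same weak-limit argument shows $(P_Ax_n)$ — and then, through the second limit in~(i), $(P_BP_Ax_n)$ — admits no bounded subsequence, so both norms diverge; this exhausts the alternatives.

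The step I expect to be the real obstacle is the strong convergence $x_{n+1}-x_n\to v$ in~(i): proving that the nonincreasing displacement norms actually decrease to $d(A,B)$, equivalently that $\overline{\operatorname{ran}(T-I)}=\overline{B-A}$, is the genuinely substantive point, and it is precisely where I would invoke the existing theory of nonexpansive maps rather than reprove it; everything after that is bookkeeping with firm nonexpansiveness, normal-cone variational inequalities, and weak closedness of convex sets.
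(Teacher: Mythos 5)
The paper offers no proof of this theorem: it is quoted verbatim from Bauschke, Combettes and Luke \cite[Th.~3.13]{bauschke2004finding}, so there is no in-paper argument to compare against. Your sketch is essentially a reconstruction of the argument in that reference, and it checks out: the identity $(T-I)x=P_BR_Ax-P_Ax$, the appeal to the Baillon--Bruck--Reich/Pazy asymptotics of firmly nonexpansive maps for strong convergence of the displacements, the minimizing-sequence argument for the second limit in (i), demiclosedness plus Fej\'er monotonicity and the normal-cone identity $N_A(p)\cap(-N_B(p))=N_{\overline{A-B}}(0)$ for (ii), and the translation orbit $T^ne=e+nv$ obtained from $v\in N_A(e)$, $-v\in N_B(e+v)$ for (iii) are all correct, as is the unboundedness argument when $E=\emptyset$. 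One substantive comment on the step you flag as the real obstacle: you do not need the full identification $\overline{\operatorname{ran}(T-I)}=\overline{B-A}$. That identity is true, but its reverse inclusion is not a ``short approximation argument''; in \cite{bauschke2004finding} it rests on range-of-sum results for monotone operators (Brezis--Haraux). What the convergence $x_{n+1}-x_n\to v$ actually requires is only that the minimal-norm element $w$ of $\overline{\operatorname{ran}(T-I)}$ equal $v$, and this follows from the easy inclusion you already have together with an evaluation on $A$: for $a\in A$ one has $(T-I)a=P_Ba-a$, so $\|w\|=\inf_x\|(T-I)x\|\le \inf_{a\in A}d(a,B)=d(A,B)=\|v\|$, while $w\in\overline{B-A}$ forces $\|w\|\ge\|v\|$; uniqueness of the nearest point to the origin in the closed convex set $\overline{B-A}$ then gives $w=v$. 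With that replacement your outline is a complete route to the theorem, modulo the quoted strong convergence of displacements for firmly nonexpansive maps, which is precisely the external input the original proof also uses.
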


Here, $N_C(x):=\{u\in\mathcal H:\langle c-x,u\rangle\leq 0,\forall c\in C\}$ denotes the \emph{normal cone} to a convex set $C\subset\mathcal H$ at a point $x\in C$, and $\Fix (T):=\{x\in\mathcal H:x\in T(x)\}$ denotes the set of \emph{fixed points} of the mapping $T$.


\begin{remark}[Behaviour of best approximation pairs]
 If best approximation pairs relative to $(A,B)$ exist and $P_A$ is weakly continuous, then the sequences in (\ref{eq:bestapprox}) actually converge weakly to such a pair \cite[Remark~3.14(ii)]{bauschke2004finding}.

 Since $x_n/n\to -v$, $\|x_n/n\|$ can be used to approximate $\|v\|=d(A,B)$ \cite[Remark~3.16(ii)]{bauschke2004finding}. \qede
\end{remark}

We turn next to an alternative new method:

\subsection{The cyclic Douglas--Rachford method}\label{sssec:cdr}
There are many possible generalizations of the classic Douglas--Rachford iteration. Given three sets $A,B,C$ and $x_0\in\mathcal H$, an obvious candidate is the iteration defined by repeatedly setting $x_{n+1}:=T_{A,B,C}x_n$ where
 \begin{equation}
 T_{A,B,C}:=\frac{I+R_CR_BR_A}{2}. \label{eq:3setdr}
 \end{equation}

For closed and convex sets, like $T_{A,B}$, the mapping $T_{A,B,C}$ is firmly nonexpansive, and has at least one fixed point provided $A\cap B\cap C\neq\emptyset$. Using a well known theorem of Opial \cite[Th.~1]{opial67}, $(x_n)$ can be shown to converge weakly to a fixed point. However, attempts to obtain a point in the intersection using said fixed point have, so far, been unsuccessful.

\begin{example}[Failure of three set Douglas--Rachford iterations.] \label{ex:3setdr}
 We give an example showing the iteration described in (\ref{eq:3setdr}) can fail to find a feasible point. Consider the one-dimensional subspaces $A,B,C\subset\R^2$ defined by
 \begin{align*}
  A &:= \{\lambda (0,1):\lambda\in\R\}, \\
  B &:= \{\lambda (\sqrt{3},1):\lambda\in\R\}, \\
  C &:= \{\lambda (-\sqrt{3},1):\lambda\in\R\}.
 \end{align*}
 Then $A\cap B\cap C=\{(0,0)\}$.

 Let $x_0=(-\sqrt{3},-1)$. Since $x_0\in\Fix R_CR_BR_A$,
  $$x_0\in\Fix\frac{I+R_CR_BR_A}{2}.$$
 However,
  $$P_Ax_0=(0,-1),\quad P_Bx_0=x_0=(-\sqrt{3},-1),\qquad P_Cx_0=(-\sqrt{3}/2,1/2).$$
 That is, $P_Ax_0,P_Bx_0,P_Cx_0\not\in A\cap B\cap C$. The trajectory is illustrated in Figure~\ref{fig:3setdr}.
 \qede

 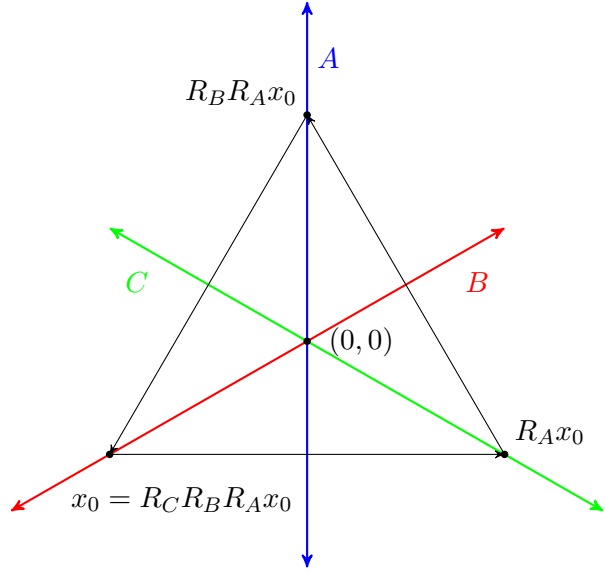
\begin{figure*}
 \begin{center}
\begin{tikzpicture}[scale=3]
 \tikzset{>=stealth',every on chain/.append style={join},every join/.style={->}}

 \draw[<->,color=blue,thick] (0,1.5) -- (0,-1);
 \draw[<->,color=red,thick] (-1.299,-0.75) -- (0.866,0.5);
 \draw[<->,color=green,thick] (1.299,-0.75) -- (-0.866,0.5);
 
 \draw (0,1.25) node[right] {{\color{blue}$A$}};
 \draw (0.65,0.35) node[below right] {{\color{red}$B$}};
 \draw (-0.65,0.35) node[below left] {{\color{green}$C$}};
 
 \draw[->] (-0.866,-0.5) -- (0.866,-0.5);
 \draw[->] (0.866,-0.5) -- (0,1);
 \draw[->] (0,1) -- (-0.866,-0.5);
 
 \filldraw[black] (-0.866,-0.5) circle(0.4pt);
 \filldraw[black] (0.866,-0.5) circle(0.4pt);
 \filldraw[black] (0,1) circle(0.4pt);
 \filldraw[black] (0,0) circle(0.4pt);
 
 \draw (-0.55,-0.6) node[below] {$x_0=R_CR_BR_Ax_0$};
 \draw (0.866,-0.5) node[above right] {$R_Ax_0$};
 \draw (0,1) node[above left] {$R_BR_Ax_0$};
 \draw (0.05,0) node[right] {$(0,0)$};
\end{tikzpicture}
 \caption{Trajectory of Example~\ref{ex:3setdr}.}\label{fig:3setdr}
 \end{center}
 \end{figure*}

\end{example}

Instead, Borwein and Tam \cite{cycdr} considered cyclic applications of $2$-set Douglas--Rachford operators. Given $N$ sets $C_1,C_2,\dots,C_N$, and $x_0\in\mathcal H$, their \emph{cyclic Douglas--Rachford scheme} iterates by repeatedly setting $x_{n+1}:=T_{[C_1,C_2,\dots,C_N]}x_n$, where $T_{[C_1,C_2,\dots,C_N]}$ denotes the \emph{cyclic Douglas--Rachford operator} defined by
 $$T_{[C_1,C_2,\dots,C_N]}:=T_{C_N,C_1}T_{C_{N-1},C_N}\dots,T_{C_2,C_3}T_{C_1,C_2}.$$

In the consistent case, the iterations behave analogously to the classical Douglas--Rachford scheme (cf. Theorem~\ref{th:DR}).
\begin{theorem}[Cyclic Douglas--Rachford]\label{th:cycDR}
  Let $C_1,C_2,\dots,C_N\subseteq\mathcal H$ be closed and convex sets with a nonempty intersection. For any $x_0\in\mathcal H$, set $x_{n+1}=T_{[C_1\,C_2\,\dots\,C_N]}x_n$. Then $(x_n)$ converges weakly to a point $x$ such that $P_{C_i}x = P_{C_j}x$, for all indices $i,j$. Moreover, $P_{C_j}x\in \bigcap_{i=1}^NC_i$, for each index $j$.
\end{theorem}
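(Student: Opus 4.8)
The plan is to reduce everything to two elementary facts about the building block $T_{A,B}$: for closed convex $A,B$ it is \emph{firmly nonexpansive}, and $A\cap B\subseteq\Fix T_{A,B}$. Both are immediate. For convex $A$ the projection $P_A$ is firmly nonexpansive, so the reflection $R_A=2P_A-I$ is nonexpansive; hence $R_BR_A$ is nonexpansive and $T_{A,B}=\tfrac12(I+R_BR_A)$ is firmly nonexpansive. And if $p\in A\cap B$ then $R_Ap=R_Bp=p$, so $T_{A,B}p=p$. Write $S_i:=T_{C_i,C_{i+1}}$ with indices taken cyclically, $C_{N+1}:=C_1$, so that $T:=T_{[C_1\,C_2\,\dots\,C_N]}=S_NS_{N-1}\cdots S_1$. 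Then $T$ is a composition of firmly nonexpansive maps, hence averaged (with averagedness constant $N/(N+1)$), and $\bigcap_{k=1}^N C_k\subseteq\bigcap_{i=1}^N\Fix S_i\subseteq\Fix T$; in particular $\Fix T\neq\emptyset$.

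Next I would invoke the standard convergence theory for averaged operators with nonempty fixed-point set (the Krasnoselskii--Mann theorem together with demiclosedness of $I-T$ and Opial's lemma): the iteration $x_{n+1}=Tx_n$ is asymptotically regular, $x_n-x_{n+1}\to 0$, and $(x_n)$ converges weakly to some $x\in\Fix T$. This reduces the theorem to a purely algebraic statement about $\Fix T$, namely that every $x\in\Fix T$ satisfies $P_{C_i}x=P_{C_j}x\in\bigcap_k C_k$.

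The heart of the argument is to show $\Fix T=\bigcap_{i=1}^N\Fix S_i$ and then to identify that set. One inclusion is trivial. For the other, fix $p\in\bigcap_k C_k\subseteq\bigcap_i\Fix S_i$ and $x\in\Fix T$. Since each $S_i$ is nonexpansive with $S_ip=p$, the chain $\|x-p\|=\|S_N\cdots S_1x-p\|\le\|S_{N-1}\cdots S_1x-p\|\le\dots\le\|S_1x-p\|\le\|x-p\|$ consists entirely of equalities; feeding these into the firm-nonexpansivity inequality $\|S_iy-p\|^2+\|(I-S_i)y\|^2\le\|y-p\|^2$ (evaluated at the fixed point $p$) successively at $y=x,\ S_1x,\ S_2S_1x,\dots$ forces $S_i$ to fix the relevant iterate, and an easy induction gives $x\in\Fix S_i$ for every $i$. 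Finally, let $x\in\bigcap_i\Fix S_i$. From $S_ix=x$ one obtains $R_{C_{i+1}}R_{C_i}x=x$, which on expanding the reflections gives $P_{C_{i+1}}(2P_{C_i}x-x)=P_{C_i}x$; since the left side lies in $C_{i+1}$, this shows $P_{C_i}x\in C_i\cap C_{i+1}$, and in particular $d(x,C_{i+1})\le\|x-P_{C_i}x\|=d(x,C_i)$. Running this inequality around the cycle $C_1,C_2,\dots,C_N,C_1$ forces $d(x,C_1)=\dots=d(x,C_N)$; then $P_{C_i}x$ is a point of the closed convex set $C_{i+1}$ at distance exactly $d(x,C_{i+1})$ from $x$, so by uniqueness of the projection $P_{C_i}x=P_{C_{i+1}}x$. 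Hence all the projections $P_{C_j}x$ coincide in a common point $z$, and $z=P_{C_i}x\in C_i\cap C_{i+1}$ for every $i$ gives $z\in\bigcap_{k=1}^N C_k$ — exactly the assertion.

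I expect the main obstacle to be the identity $\Fix T=\bigcap_i\Fix S_i$: this is precisely where the hypothesis $\bigcap_k C_k\neq\emptyset$ is indispensable, since a common fixed point $p$ is what makes the chain of equalities run, and without it the analogue fails — the same phenomenon responsible for the failure of the naive three-set operator in Example~\ref{ex:3setdr}. Once this identification is available, the remaining ingredients (firm nonexpansivity, averagedness of the composition, weak convergence, and the cyclic distance-telescoping with uniqueness of projections) are routine.
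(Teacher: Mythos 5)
Your proposal is correct, and every step checks out: the firm nonexpansiveness of each block $T_{C_i,C_{i+1}}$, the Krasnoselskii--Mann/Opial argument giving weak convergence of the Picard iterates of the averaged composition to a fixed point, the equality-chain argument showing $\Fix T=\bigcap_i\Fix T_{C_i,C_{i+1}}$ in the presence of a common fixed point, and the final identification (from $R_{C_{i+1}}R_{C_i}x=x$ one gets $P_{C_{i+1}}(2P_{C_i}x-x)=P_{C_i}x\in C_i\cap C_{i+1}$, the distances $d(x,C_i)$ telescope around the cycle, and uniqueness of nearest points forces all projections to coincide in a point of the intersection). Note that the survey itself offers no proof; it cites Borwein--Tam \cite{cycdr}, and your argument is essentially a reconstruction of that proof. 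The only real difference is packaging: the cited source obtains both the weak convergence of iterates of the composition and the identity $\Fix(S_N\cdots S_1)=\bigcap_i\Fix S_i$ from the Bruck--Reich theory of strongly nonexpansive mappings (firmly nonexpansive maps are strongly nonexpansive, the class is closed under composition), whereas you prove these two facts by hand via averagedness and the firm-nonexpansiveness equality case; both routes then conclude with the same cyclic projection argument. One small caveat on your closing remark: Example~\ref{ex:3setdr} is not an instance of the fixed-point identity failing for lack of a common intersection point (there the intersection is nonempty); it fails because the fixed points of $\tfrac12(I+R_CR_BR_A)$ simply do not encode feasibility, which is a different phenomenon from the one your lemma guards against. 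This does not affect the validity of your proof.
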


 \begin{example}[Example~\ref{ex:3setdr} revisited]\label{ex:3setcycdr}
   Consider the cyclic Douglas--Rachford scheme applied to the sets of Example~\ref{ex:3setdr}. As before, let $x_0=(-\sqrt{3},-1)$. By Theorem~\ref{th:cycDR}, the sequence $(x_n)$ converges to a point $x$ such that
    $$P_Ax=P_Bx=P_Cx=(0,0).$$
 Furthermore, $P_A,P_B,P_C$ are orthogonal projections, hence $x=(0,0)$. The trajectory is illustrated in Figure~\ref{fig:3setcycdr}.

As a consequence of the problem's rotational symmetry, the sequence of Douglas--Rachford operators can be described by
$$T_{A,B} x_n = P_C x_n,\quad T_{B,C} T_{A,B} x_n = P_A P_C x_n,\quad x_{n+1}=T_{C,A} T_{B,C} T_{A,B} x_n = P_B P_A P_C x_n.$$
That is, starting at $x_0$, the cyclic Douglas--Rachford trajectory applied to the $A,B,C$, coincides with von Neumann's alternating projection method applied to $C,A,B$ (cf. \cite[Cor.~3.1]{cycdr}).\qede

   \begin{figure*}
     \begin{center}
\begin{tikzpicture}[scale=3]
 \tikzset{>=stealth',every on chain/.append style={join},every join/.style={->}}

 \draw[<->,color=blue,thick] (0,1.5) -- (0,-1);
 \draw[<->,color=red,thick] (-1.299,-0.75) -- (0.866,0.5);
 \draw[<->,color=green,thick] (1.299,-0.75) -- (-0.866,0.5);
 
 \draw (0,1.25) node[right] {{\color{blue}$A$}};
 \draw (0.65,0.35) node[below right] {{\color{red}$B$}};
 \draw (-0.65,0.35) node[below left] {{\color{green}$C$}};
 

\draw[dotted,->,black!50] (-0.87, -0.5) -- (0.87, -0.5);
\draw[dotted,->,black!50] (0.87, -0.5) -- (0.0, 1.0138);
\draw[dotted,black!50] (-0.87, -0.5) -- (0.0, 1.0138);
\draw[dotted,->,black!50] (-0.435, 0.25690000000000002) -- (0.0060030000000000361, -0.51044522000000003);
\draw[dotted,->,black!50] (0.0060030000000000361, -0.51044522000000003) -- (0.44708884139999999, 0.25704414403599984);
\draw[dotted,black!50] (-0.435, 0.25690000000000002) -- (0.44708884139999999, 0.25704414403599984);
\draw[dotted,->,black!50] (0.0060444206999999972, 0.25697207201799993) -- (-0.22054349230565995, -0.1372908966118484);
\draw[dotted,->,black!50] (-0.22054349230565995, -0.1372908966118484) -- (0.22054349230565995, -0.1372908966118484);
\draw[dotted,black!50] (0.0060444206999999972, 0.25697207201799993) -- (0.22054349230565995, -0.1372908966118484);
 
 \filldraw[black] (-0.866,-0.5) circle(0.4pt);
 \filldraw[black] (-0.435, 0.256) circle(0.4pt);
 \filldraw[black] (0, 0.257) circle(0.4pt);
 \filldraw[black] (0.113, 0.0598) circle(0.4pt);

 \draw (-0.87, -0.5) node[below right] {$x_0$};
 \draw (-0.36, 0.256) node[above] {$T_{A,B}x_0$};
 \draw (0, 0.5) node[right] {$T_{B,C}T_{A,B}x_0$};
 \draw (0.113, 0) node[right] {$T_{[A\,B\,C]}x_0=T_{C,A}T_{B,C}T_{A,B}x_0$};
 \draw (-1.8,0) node {}; 

\draw[->] (-0.87, -0.5) -- (-0.435, 0.25690000000000002);
\draw[->] (-0.435, 0.25690000000000002) -- (0.0060444206999999972, 0.25697207201799993);
\draw[->] (0.0060444206999999972, 0.25697207201799993) -- (0.11329395650282997, 0.059840587703075765);
\draw[->] (0.11329395650282997, 0.059840587703075765) -- (0.054354144776545446, -0.034735624208113311);
\draw[->] (0.054354144776545446, -0.034735624208113311) -- (-0.0019175000879933131, -0.032777392390574284);
\draw[->] (-0.0019175000879933131, -0.032777392390574284) -- (-0.014737540711898141, -0.007134071551871516);
\draw[->] (-0.014737540711898141, -0.007134071551871516) -- (-0.0067877063030386438, 0.0046765374154157258);
\draw[->] (-0.0067877063030386438, 0.0046765374154157258) -- (0.00038885804079640289, 0.0041791125505887617);
\draw[->] (0.00038885804079640289, 0.0041791125505887617) -- (0.0019151284697052121, 0.00084678901330169278);

\end{tikzpicture}
       \caption{Trajectory of Example~\ref{ex:3setcycdr}. Solid black arrows represent $2$-set Douglas--Rachford iterations (i.e. they connect the sequence $x_0,\allowbreak T_{A,B}x_0, \allowbreak T_{B,C}T_{A,B}x_0, \allowbreak T_{C,A}T_{B,C}T_{A,B}x_0,\dots$). Constructions (reflect-reflect-average) are dotted.}\label{fig:3setcycdr}
     \end{center}
   \end{figure*}
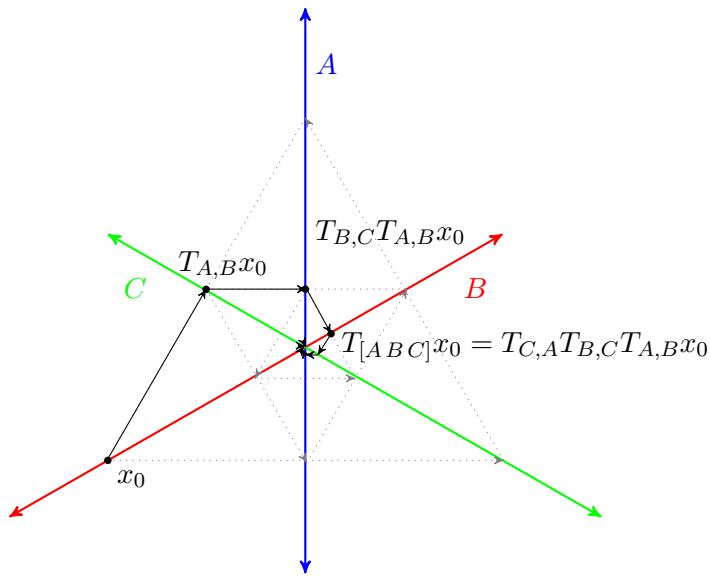

 \end{example}

If $N=2$ and $C_1\cap C_2=\emptyset$ (the inconsistent case), unlike the classical Douglas--Rachford scheme, the iterates are not unbounded (cf. Theorem~\ref{th:DR2}). Moreover, there is evidence to suggest that the scheme can be used to produce best approximation pairs relative to $(C_1,C_2)$ whenever they exist.

The framework of Borwein and Tam \cite{cycdr}, can also be used to derive a number of applicable variants. A particularly nice one is the \emph{averaged Douglas--Rachford scheme} which, for any $x_0\in\mathcal H$, iterates by repeatedly setting\footnote{Here indices are understood modulo $N$. That is, $C_{N+1}:=C_1$.}
 $$x_{n+1} := \frac{1}{N}\left(\sum_{i=1}^NT_{C_i,C_{i+1}}\right)x_n.$$
Since each $2$-set Douglas--Rachford operator can be computed independently the iteration easily parallelizes.

\begin{remark}[Failure of norm convergence] It is known that the alternating projection method  may fail to converge in norm \cite{bmr04}, and it follows that both classical and cyclic Douglas-Rachford methods may also only converge weakly. For the classical method this may be deduced from  \cite[Section 5]{bmr04}. For the cyclic case, see  \cite[Cor. 3.1.]{cycdr} for details. \qede \end{remark}

\subsubsection{Numerical Performance}
Applied to the problem of finding a point in the intersection of $N$ balls in $\R^n$, initial numerical experiments suggest that the cyclic Douglas--Rachford outperforms the classical Douglas--Rachford scheme \cite{cycdr}.

To ensure this performance is not an artefact of having highly symmetrical constraints, the same problem, replacing the balls with prolate spheroids (the type of ellipsoid obtained by rotating a $2$-dimensional ellipse around its major axis) having one common focus was considered. Unlike ball constraints, there is no simple formula for computing the projection onto a spheroid. However, the projections can be computed efficiently.  The process reduces to numerically solving, for $t$, the equation
 $$\frac{a^2u^2}{(a^2-t)^2}+\frac{b^2v^2}{(b^2-t)^2} = 1,$$
for constants $a,b>0$ and $u,v\in\R$. For further details, see \cite[Ex.~2.3.18]{borwein2010convex}.

In the spheroid case, the computational results are very similar to the ball case, considered in~\cite{cycdr}. An example having three spheroids in $\R^2$ is illustrated in Figure~\ref{fig:cycDRellipse}.

\begin{figure*}
  \begin{center}
    \includegraphics[width=0.7\textwidth]{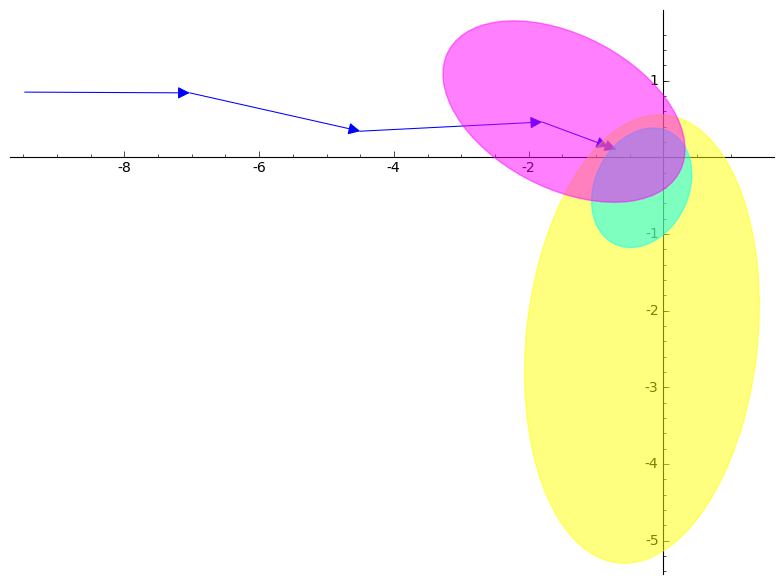}
    \caption{A cyclic Douglas--Rachford trajectory for three ellipses in $\R^2$. Blue arrows represent $2$-set Douglas--Rachford iterations (i.e. they connect the sequence $x_0,\allowbreak T_{A,B}x_0, \allowbreak T_{B,C}T_{A,B}x_0, \allowbreak T_{C,A}T_{B,C}T_{A,B}x_0,\dots$).}\label{fig:cycDRellipse}
  \end{center}
\end{figure*}

\section{Feasibility problems in the product space}\label{sec:prod}

Given $C_1,C_2,\cdots,C_N\subset\R^n$, the \emph{feasibility problem}\footnote{In this context, ``feasibility" and ``satisfiability" can be used interchangeably.} asks:
 \begin{equation}\label{eq:feasibilityproblem}
  \text{Find }x\in\bigcap_{i=1}^NC_i\subset\R^n.
 \end{equation}

A great many  optimization and reconstruction problems, both continuous and combinatorial, can be cast within this framework.

Define two sets $C,D\subset(\R^n)^N$ by
 \begin{equation*}
  C:=\prod_{i=1}^NC_i,\quad D:=\{(x,x,\dots,x)\in(\R^n)^N:x\in\R^n\}.
 \end{equation*}
While the set $D$, the \emph{diagonal}, is always a closed subspace, the properties of $C$ are largely inherited. For instance, when $C_1,C_2,\dots,C_N$ are closed and convex, so is $C$.

Consider, now, the equivalent feasibility problem:
 \begin{equation}\label{eq:productfeasibilityproblem}
  \text{Find }\mathbf x\in C\cap D\subset(\R^n)^N.
 \end{equation}
Equivalent in the sense that
 $$x\in\bigcap_{i=1}^N C_i\iff (x,x,\dots,x)\in C\cap D.$$
Moreover, knowing the projections onto $C_1,C_2,\dots,C_N$, the projections onto $C$ and $D$ can be easily computed. The proof  has recourse to the standard characterization of orthogonal projections, $$p=P_Dx\iff \langle x-p,m\rangle=0\text{ for all }m\in D.$$

\begin{proposition}[Product projections]
For any $\mathbf x=(\mathbf x_1,\ldots,\mathbf x_N)\in(\R^n)^N$ one has
\begin{equation}
 P_C\mathbf x = \prod_{i=1}^NP_{C_i}(\mathbf x_i),\quad P_D\mathbf x=\left(\frac{1}{N}\sum_{i=1}^N\mathbf x_i,\ldots,\frac{1}{N}\sum_{i=1}^N\mathbf x_i\right).
\end{equation}
\end{proposition}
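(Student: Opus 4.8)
The plan is to prove each of the two formulas separately, in both cases reducing to the definition of the orthogonal projection as the (unique) nearest point, using the product Hilbert-space structure on $(\R^n)^N$ with inner product $\langle \mathbf x,\mathbf y\rangle = \sum_{i=1}^N\langle \mathbf x_i,\mathbf y_i\rangle$ and squared norm $\|\mathbf x\|^2=\sum_{i=1}^N\|\mathbf x_i\|^2$.

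For $P_C$, the key observation is that the distance functional separates across coordinates: for any $\mathbf c=(\mathbf c_1,\dots,\mathbf c_N)\in C$ we have $\|\mathbf x-\mathbf c\|^2=\sum_{i=1}^N\|\mathbf x_i-\mathbf c_i\|^2$, and since $\mathbf c\in C$ iff $\mathbf c_i\in C_i$ for each $i$ with no coupling between the coordinates, the sum is minimized exactly when each term $\|\mathbf x_i-\mathbf c_i\|^2$ is minimized over $\mathbf c_i\in C_i$, i.e. when $\mathbf c_i\in P_{C_i}(\mathbf x_i)$. This gives $P_C\mathbf x=\prod_{i=1}^N P_{C_i}(\mathbf x_i)$ as a set-valued identity; when the $C_i$ are closed and convex each $P_{C_i}$ is single-valued and hence so is $P_C$.

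For $P_D$, I would use the characterization quoted just before the statement: $p=P_D\mathbf x$ iff $p\in D$ and $\langle \mathbf x-p,\mathbf m\rangle=0$ for all $\mathbf m\in D$. Write the candidate $p=(\bar x,\dots,\bar x)$ with $\bar x:=\frac1N\sum_{i=1}^N\mathbf x_i$; clearly $p\in D$. A general element of $D$ is $\mathbf m=(m,\dots,m)$ with $m\in\R^n$, so $\langle\mathbf x-p,\mathbf m\rangle=\sum_{i=1}^N\langle\mathbf x_i-\bar x,m\rangle=\big\langle\sum_{i=1}^N(\mathbf x_i-\bar x),\,m\big\rangle=\langle 0,m\rangle=0$, since $\sum_{i=1}^N(\mathbf x_i-\bar x)=\sum_{i=1}^N\mathbf x_i-N\bar x=0$ by the choice of $\bar x$. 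Hence $p$ satisfies the variational characterization, and since $D$ is a closed subspace the projection is unique, so $P_D\mathbf x=p$.

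There is essentially no hard part here: both statements are routine once one writes out the product norm. The only point requiring a modicum of care is the separation-of-variables argument for $P_C$ — one should note explicitly that minimizing a sum of independent nonnegative terms over a product constraint set amounts to minimizing each term separately — and, if one wants the conclusion phrased as a single-valued map, invoking closedness and convexity of the $C_i$ (and hence of $C$) to guarantee uniqueness, as recalled after the definition of $P_A$ in Section~\ref{sec:conv}.
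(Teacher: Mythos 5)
Your proposal is correct and follows essentially the same route as the paper: the coordinate-wise separation of the squared norm for $P_C$, and the orthogonality characterization of projection onto the closed subspace $D$ for the second formula (you verify the averaged candidate satisfies the characterization, while the paper derives the average from it -- an immaterial difference since the characterization is an equivalence). No gaps.
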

\begin{proof}
 For any $\mathbf c=(\mathbf c_1,\ldots,\mathbf c_N)\in C$,
 $$\|\mathbf x-\mathbf c\|^2 = \sum_{i=1}^N\|\mathbf x_i-\mathbf c_i\|^2\geq\sum_{i=1}^N\|\mathbf x_i-P_C\mathbf x_i\|^2=\left\|\mathbf x-\prod_{i=1}^NP_{C_i}(\mathbf x_i)\right\|^2.$$
 This proves the form of the projection onto $C$. Let $(\mathbf p,\ldots,\mathbf p)\in D$ be the projection of $\mathbf x$ onto $D$. For any $\mathbf m\in\R^n$, one has $(\mathbf m,\ldots,\mathbf m)\in D$, and then
 $$0=\langle \mathbf x-(\mathbf p,\ldots,\mathbf p),(\mathbf m,\ldots,\mathbf m)\rangle=\sum_{i=1}^N\langle\mathbf x_i-\mathbf p,\mathbf m\rangle=\langle\sum_{i=1}^N\mathbf x_i-N\mathbf p,\mathbf m\rangle;$$
 whence,
 $\mathbf p=\frac{1}{N}\sum_{i=1}^N\mathbf x_i$,
%
 and the proof is complete.
\end{proof}

Most projection algorithms can be applied to feasibility problems with any finite number of sets without significant modification. An exception is the Douglas--Rachford scheme, which  until \cite{cycdr} had only been successfully investigated for the case of two sets. This has made the product formulation crucial for the Douglas--Rachford scheme.

 \section{Non-convex Douglas--Rachford methods} \label{sec:nonconv}

 While there is not nearly so much theory in the non-convex setting, there are some useful beginnings:

     \begin{figure*}
 \begin{center}
  \includegraphics[width=0.5\textwidth]{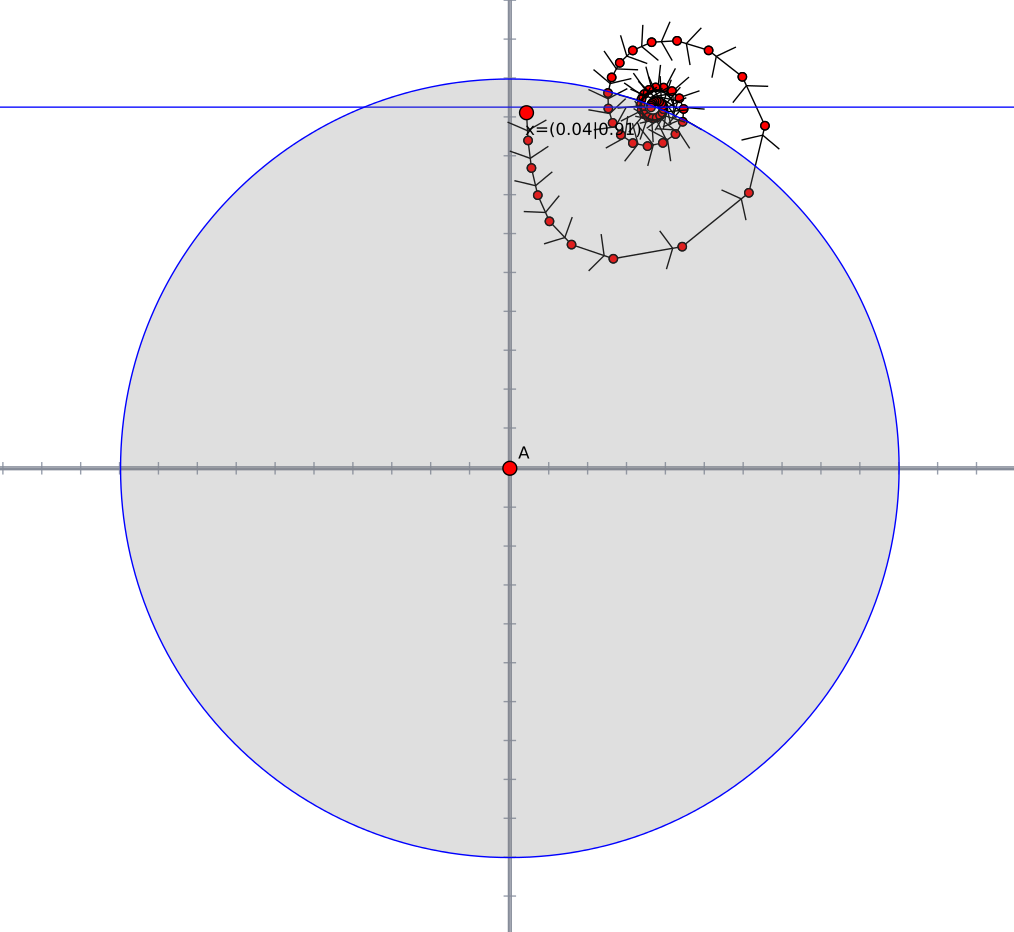}
    \caption{A Douglas--Rachford trajectory showing local convergence to a feasible point, as in Theorem~\ref{th:borweinDR}, exhibiting ``spiralling" behaviour.}
 \end{center}
\end{figure*}

 \subsection{Theoretical underpinnings}

  As a prototypical non-convex scenario, Borwein and Sims \cite{borwein2011douglas} considered
   the Douglas--Rachford scheme applied to a Euclidean sphere and a line. More precisely, they looked at the sets
   $$S:=\{x\in\R^n:\|x\|=1\},\quad L:=\{\lambda a+\alpha b\in\R^n:\lambda\in\R\},$$
  where, without loss of generality, $\|a\|=\|b\|=1,a\perp b,\alpha>0$.
We summarize their findings.

 Appropriately normalized the iteration becomes
  \begin{equation}\label{iter2}\begin{array}{l}
x_{n+1}(1)=x_n(1)/\rho_n,\\
x_{n+1}(2)=\alpha+(1-1/\rho_n)x_n(2), \text{ and}\\
x_{n+1}(k)=(1-1/\rho_n)x_n(k),\text { for } k=3,\ldots,N,
\end{array}
\end{equation}
where $\rho_n:=\|x_n\|:=\sqrt{x_n(1)^2+\ldots+x_n(N)^2}$, see \cite{borwein2011douglas} for details.
  The non-convex sphere, $S$, provides an accessible model of many reconstruction problems in which the magnitude, but not the phase, of a signal is measured.

Note $\alpha\in[0,1]$ represents the consistent case, and $\alpha>1$ the inconsistent one.

  \begin{theorem}[Sphere and line]\label{th:borweinDR}
    Given $x_0\in\R^n$ define $x_{n+1}:=T_{S,L}x_n$. Then:
   \begin{enumerate}
    \item If $0<\alpha<1$, $(x_n)$ is locally convergent at each of $\pm\sqrt{1-\alpha^2}a+\alpha b$.
    \item If $\alpha=0$ and $x_0(1)>0$, $(x_n)$ converges to $a$.
    \item If $\alpha=1$ and $x_0(1)\neq0$, $(x_n)$ converges to $\hat yb$ for some $\hat y>1$.
    \item If $\alpha>1$ and $x_0(1)\neq0$, $\|x_n\|\to\infty$.
   \end{enumerate}
  \end{theorem}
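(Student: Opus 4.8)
The plan is to exploit the explicit normalized iteration \eqref{iter2}, which decouples into a one‑dimensional system in the variables $x_n(1)$ and the ``bulk'' coordinate measuring the remaining components. First I would note that the update for coordinates $k=3,\dots,N$ is a pure scaling by $(1-1/\rho_n)$ which is common to all of them; hence the direction of the vector $(x_n(3),\dots,x_n(N))$ is frozen and only its length $r_n:=\sqrt{x_n(3)^2+\dots+x_n(N)^2}$ evolves. Combined with the coordinate $x_n(2)$, this reduces the analysis to the planar (or at most three‑dimensional) dynamical system in $(x_n(1),x_n(2),r_n)$, with $\rho_n^2=x_n(1)^2+x_n(2)^2+r_n^2$. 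So without loss of generality the whole problem is two‑ or three‑dimensional, and the fixed points of the iteration are exactly the points satisfying $P_S x\in S\cap L$, i.e.\ $\pm\sqrt{1-\alpha^2}\,a+\alpha b$ in the consistent case and (after the computation in (3)) a point on the $b$‑axis in the limiting case $\alpha=1$.

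For part (1), $0<\alpha<1$: I would linearize the map $T_{S,L}$ at each of the two feasible fixed points $p_\pm:=\pm\sqrt{1-\alpha^2}\,a+\alpha b$. Writing the iteration as a smooth map near $p_\pm$ (note $\rho_n$ is smooth and nonzero there since $p_\pm\neq 0$), I compute the Jacobian and show its spectral radius is strictly less than $1$; local convergence then follows from the standard Ostrowski‑type fixed‑point theorem. The eigenvalues will in fact be complex with modulus $<1$, which is what produces the spiralling shown in the figure — this is the computation carried out in \cite{borwein2011douglas}, and I would simply reproduce it, checking the characteristic polynomial of the $2\times2$ (or $3\times3$ with a trivial contracting block from the $r$‑direction) derivative. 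The only subtlety is to confirm the $r$‑coordinate (the components orthogonal to the plane $\operatorname{span}\{a,b\}$) also contracts, which is immediate since near $p_\pm$ one has $0<1-1/\rho_n<1$.

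For parts (2) and (3), the system collapses further. If $\alpha=0$ then $x_{n+1}(2)=(1-1/\rho_n)x_n(2)$ has the same form as the other bulk coordinates, so effectively $L$ is the $a$‑axis itself; one shows $\rho_n\to 1$ monotonically from the appropriate side, whence $x_n(1)/\rho_n\to 1$ (using $x_0(1)>0$ to keep the sign, and checking $x_n(1)$ never vanishes) and all other coordinates are scaled to $0$, giving convergence to $a$. If $\alpha=1$, one shows $x_n(1)\to 0$ geometrically (since the factor $1/\rho_n<1$ eventually and $x_n(1)\neq0$ guarantees $\rho_n>|x_n(2)|$ strictly so the product telescopes to $0$) while $x_n(2)$ increases and converges to some $\hat y$; identifying the fixed‑point equation $\hat y=1+(1-1/\hat y)\hat y$, i.e.\ $\hat y-1 = \hat y - 1$ — more carefully, the limiting relation forces $\rho_\infty=\hat y$ and then consistency pins down $\hat y>1$. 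For part (4), $\alpha>1$: here there is no fixed point, and I would show $\rho_n\to\infty$ by establishing that $x_n(2)\geq \alpha + (\text{something})$ grows without bound — the recursion $x_{n+1}(2)=\alpha+(1-1/\rho_n)x_n(2)$ behaves, once $\rho_n$ is large, like $x_{n+1}(2)\approx x_n(2)+\alpha - x_n(2)/\rho_n$, and a comparison argument shows the $+\alpha$ term dominates so the iterates escape; the hypothesis $x_0(1)\neq 0$ is used to rule out the degenerate invariant subspace where $x_n(1)\equiv 0$ and the dynamics might stay bounded.

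I expect the main obstacle to be part (1): verifying rigorously that the linearization is a strict contraction (spectral radius $<1$) requires an honest computation of the Jacobian of the nonlinear map \eqref{iter2} at $p_\pm$ and bounding the roots of its characteristic polynomial uniformly in $\alpha\in(0,1)$, and — more delicately — local convergence from the linearization only gives a basin, so if one wants the cleanest statement one must also argue that the spiralling trajectories do not leave a neighbourhood on which the contraction estimate holds. The global statements (2)–(4), by contrast, reduce to monotonicity of the scalar sequence $\rho_n$ and elementary comparison arguments, which are routine.
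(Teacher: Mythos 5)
First, note that the paper you were given does not actually prove this theorem: it is an explicit summary of results from Borwein--Sims \cite{borwein2011douglas}, so the comparison is with that source. Your overall reduction and your plan for part (1) do match the approach there: near $p_\pm=\pm\sqrt{1-\alpha^2}a+\alpha b$ one linearizes \eqref{iter2}; the Jacobian of the planar map at the fixed point (where $\rho=1$) is
$\left(\begin{smallmatrix}\alpha^2 & -\alpha\sqrt{1-\alpha^2}\\ \alpha\sqrt{1-\alpha^2} & \alpha^2\end{smallmatrix}\right)$
with eigenvalues $\alpha^2\pm i\alpha\sqrt{1-\alpha^2}$ of modulus exactly $\alpha<1$, the orthogonal block vanishes at the fixed point, and Ostrowski's theorem already supplies an invariant neighbourhood, so the two ``obstacles'' you flag in part (1) (uniform root bounds, trajectories escaping the neighbourhood) are not real issues.

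The genuine gaps are in the parts you call routine. For (2), your mechanism is wrong: $\rho_n$ does \emph{not} converge to $1$ monotonically. Starting from, e.g., $x_0=(0.1,5)$ with $\alpha=0$, one gets $\rho_n\approx 5,4,3,2,1.0002$, then $\rho_5\approx 0.00086$ (a huge undershoot, after which $x_n(2)$ changes sign because the factor $1-1/\rho_n$ is negative), then $\rho_n$ oscillates back toward $1$. Convergence to $a$ is true, but it needs a different argument (e.g.\ tracking $t_n=x_n(2)/x_n(1)$, which satisfies $t_{n+1}=(\rho_n-1)t_n$, and showing eventual contraction, as in the careful case analysis of the source). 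For (3), your argument is circular, as you half-admit: when $\alpha=1$ \emph{every} point $yb$ with $y>0$ is a fixed point of \eqref{iter2}, so ``consistency'' of the limiting relation pins down nothing; the real work is to show $x_n(1)\to 0$, that the nondecreasing sequence $x_n(2)$ (monotonicity follows from $x_n(2)\le\rho_n$) is bounded, and --- using $x_0(1)\neq 0$ --- that the limit is \emph{strictly} greater than $1$, i.e.\ the iterates do not converge to the feasible point $b$; none of this is supplied. By contrast (4) is easier than your sketch suggests: since $x_n(2)/\rho_n\le 1$ always, $x_{n+1}(2)\ge x_n(2)+(\alpha-1)$, so $\|x_n\|\ge x_n(2)\to\infty$ with no asymptotic comparison needed.
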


\begin{figure*}[t]
 \begin{center}
  \includegraphics[width=0.4\textwidth]{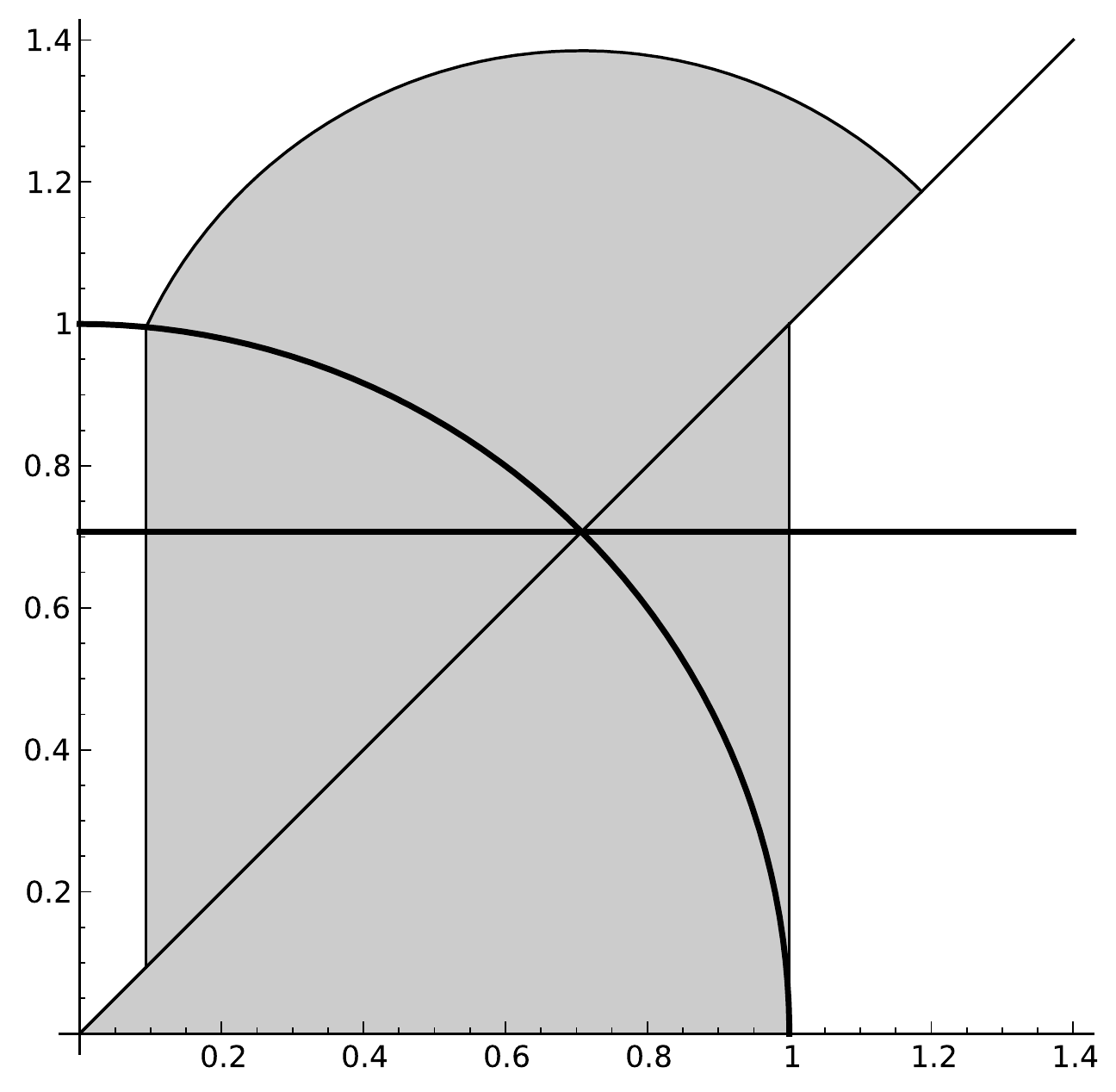}
   \caption{The explicit region of convergence (grey) given in~\cite{aragon2012global}.} \label{fig:fran-cvgt}
 \end{center}
\end{figure*}

  Replacing $L$ with the proper affine subspace, $A:=A_0+\alpha b$ for some non-trivial subspace $A_0$, $(x_n)$ needs to be excluded from $A_0^\perp$. Now, if $x_0 \not\in A_0^\perp$ then for some infeasible $q\neq 0$, $x_0\in Q:=A_0^\perp +\R q$, then $(x_n)$ are confined to the subspace $Q$. Theorem~\ref{th:borweinDR} can, with some care then be extended to the following.

    \begin{figure*}
 \begin{center}
  \includegraphics[width=0.4\textwidth]{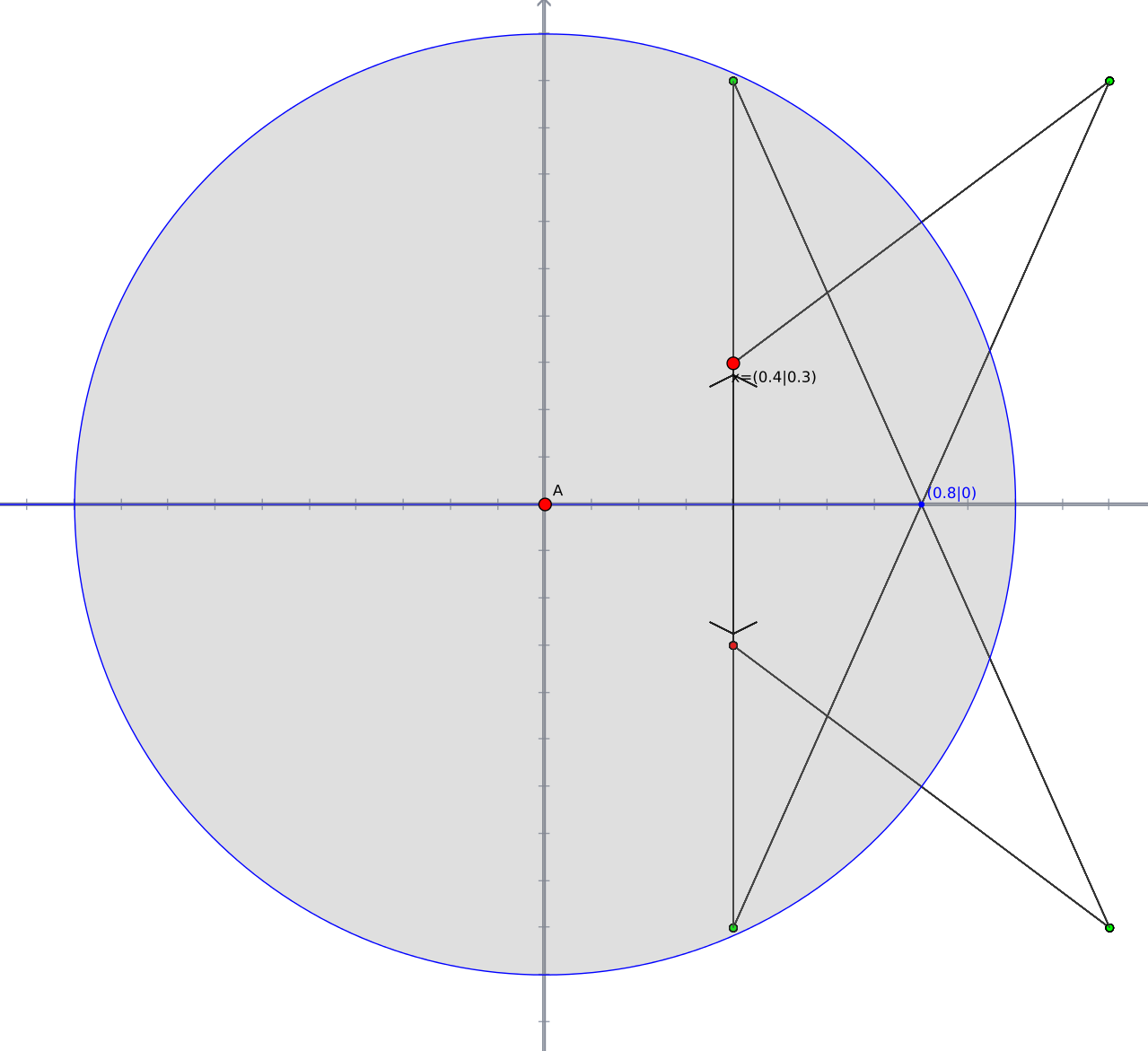}
    \caption{A two cycle $(2/5,\pm 3/10)$.}\label{fig:2cyc}
 \end{center}
\end{figure*}

  \begin{corollary}[Sphere and non-trivial affine subspace]\label{cor:DR}
   For each feasible point $p\in S\cap A\cap Q$ there exists a neighbourhood $N_p$ of $p$ in $Q$ such that starting from any $x_0\in N_p$ the Douglas--Rachford scheme converges to $p$.
  \end{corollary}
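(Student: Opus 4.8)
The plan is to reduce Corollary~\ref{cor:DR} to Theorem~\ref{th:borweinDR}(1) by exploiting the invariance of the Douglas--Rachford iteration under orthogonal transformations and its restriction to the affine-cum-linear subspace $Q = A_0^\perp + \mathbb{R}q$. First I would observe that $T_{S,A}$ commutes with any orthogonal map $U$ fixing $A$ setwise: since $P_S(Ux) = U P_S(x)$ for $x\neq 0$ (the sphere is rotation-invariant) and $P_A(Ux) = U P_A(x)$ when $U$ stabilizes $A$, we get $R_S R_A U = U R_S R_A$, hence $T_{S,A} U = U T_{S,A}$. This lets me choose coordinates so that $A$ has the form $A_0 + \alpha b$ with $b \perp A_0$ and $\|b\|=1$; the feasible set $S\cap A$ is then a translate of a sphere of radius $\sqrt{1-\alpha^2}$ inside the affine slice $\{x : \langle x,b\rangle = \alpha\}\cap(A_0 + \mathbb{R}b)$ (assuming $\alpha<1$, which is the consistent-case hypothesis implicit in "$p\in S\cap A$").

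Next I would prove the invariant-subspace claim sketched before the corollary: if $x_0\notin A_0^\perp$, pick a unit vector $q$ in the $x_0$-component orthogonal to $A_0$, so that $x_0\in Q := A_0^\perp + \mathbb{R}q$. Since $P_A x = P_{A_0}x + \alpha b$ and $P_{A_0}$ maps $Q$ into $A_0\cap Q$ (a line through the origin, namely $\mathbb{R}q'$ for the appropriate $q'\in A_0$), while $P_S$ preserves $Q$ by radial invariance, one checks $T_{S,A}(Q)\subseteq Q$. Now restrict attention to $Q$: inside $Q$, the set $A\cap Q$ is a \emph{line} (a one-dimensional affine subspace), because $A\cap Q = (A_0\cap Q) + \alpha b = \mathbb{R}q' + \alpha b$, and $S\cap Q$ is a lower-dimensional sphere. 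So the restricted dynamics is exactly the sphere-and-line system of Theorem~\ref{th:borweinDR}, with the role of $N$ played by $\dim Q$ and the role of the feasible points $\pm\sqrt{1-\alpha^2}a + \alpha b$ played by the two points of $S\cap A\cap Q$. A given feasible $p\in S\cap A\cap Q$ is one of these two; if $\alpha=0$ and $p$ lies in the "$x_0(1)>0$" half, apply part (2) instead of part (1).

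Finally I would transfer the local-convergence conclusion: Theorem~\ref{th:borweinDR}(1) gives a neighbourhood of $p$ \emph{within the ambient space of that subsystem}, i.e.\ within $Q$, from which $T_{S,A}$-iterates converge to $p$; call it $N_p$. Since $T_{S,A}$ leaves $Q$ invariant, iterates started in $N_p\subseteq Q$ stay in $Q$ and coincide with the iterates of the restricted system, so they converge to $p$. That is precisely the statement. The main obstacle I anticipate is the bookkeeping in the second paragraph: verifying carefully that $T_{S,A}(Q)\subseteq Q$ and that the restricted operator is genuinely (up to an orthogonal change of coordinates and the normalization in~\eqref{iter2}) the operator $T_{S,L}$ of Theorem~\ref{th:borweinDR}, rather than some perturbation of it --- in particular handling the degenerate possibility $P_{A_0}x_0 + \text{(component along }q)\in A_0^\perp$, which is why the hypothesis $x_0\notin A_0^\perp$ (equivalently the exclusion built into the choice of $Q$) is needed. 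Everything else is either rotation-invariance of the sphere or a direct appeal to the already-established one-dimensional theorem.
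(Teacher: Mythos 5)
Your reduction is exactly what the paper intends: it offers no separate proof of Corollary~\ref{cor:DR}, only the preceding remark that for $x_0\notin A_0^\perp$ the iterates are confined to the invariant subspace $Q=A_0^\perp+\R q$, after which Theorem~\ref{th:borweinDR} is extended ``with some care'' --- precisely your restriction-to-$Q$, sphere-and-line identification, and local appeal to parts (1)--(2). Your argument is correct in substance, with two small points to tidy: $q$ must be chosen with a nonzero $A_0$-component aligned with that of $x_0$ (a vector in the direction of $P_{A_0^\perp}x_0$ would give $Q=A_0^\perp\not\ni x_0$), and the exclusion of the tangential case $\alpha=1$ is a genuine extra hypothesis rather than a consequence of feasibility, since there part (3) gives convergence to $\hat y b\neq p$.
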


  If in Theorem~\ref{th:borweinDR} $x_0(1)=0$, the behaviour of the scheme can  provably be quite chaotic \cite{borwein2011douglas}. Indeed, this was a difficulty encountered by Arag\'on and Borwein \cite{aragon2012global}, in giving an explicit region of convergence for the $\R^2$ case with $\alpha=1/\sqrt{2}$.

  \begin{theorem}[Global convergence {\cite[Th.~2.1]{aragon2012global}}] \label{th:aragonDR}
   Let $x_0\in[\epsilon,1]\times[0,1]$ with $\epsilon:=(1-2^{-1/3})^{3/2}\approx 0.0937$. Then the sequence generated by the Douglas--Rachford scheme of \eqref{iter2} with starting point $x_0$ is convergent to $(1/\sqrt{2},1/\sqrt{2})$.
  \end{theorem}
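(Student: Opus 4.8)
The plan is to reduce to the planar coordinates of \eqref{iter2} and follow the two-dimensional orbit directly. Taking $a=(1,0)$, $b=(0,1)$, the line is $L=\R\times\{1/\sqrt2\}$, the feasible set is $S\cap L=\{p^\pm\}$ with $p^\pm=(\pm1/\sqrt2,1/\sqrt2)$, and writing $x_n=(u_n,v_n)$ the recursion \eqref{iter2} reads $u_{n+1}=u_n/\rho_n$ and $v_{n+1}=1/\sqrt2+(1-1/\rho_n)v_n$, with $\rho_n=\sqrt{u_n^2+v_n^2}$. Since $u_{n+1}=u_n/\rho_n$ has the same sign as $u_n$, the hypothesis $u_0\ge\epsilon>0$ forces $u_n>0$ for all $n$, so the only feasible point reachable is $p:=p^+=(1/\sqrt2,1/\sqrt2)$, and one must prove $x_n\to p$.

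First I would pin down the role of $\epsilon$. The genuine danger is that $v_n$ turns negative, after which the orbit may enter the region where, by \cite{borwein2011douglas}, the dynamics is chaotic; and since $v_{n+1}\ge1/\sqrt2$ whenever $\rho_n\ge1$, such a drop can only occur at a step with $\rho_n<1$. Fixing $u_0\in[\epsilon,1]$ and minimising $v_1=1/\sqrt2+(1-1/\rho_0)v_0$ over $v_0\in[0,1]$ is a one-variable calculus exercise: the stationary condition is $u_0^2=\rho_0^3$, giving $\rho_0=u_0^{2/3}$ and $v_0^2=u_0^{4/3}-u_0^2$, and back-substitution collapses the minimum to $1/\sqrt2-(1-u_0^{2/3})^{3/2}$. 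This is $\ge0$ exactly when $(1-u_0^{2/3})^{3/2}\le 2^{-1/2}$, i.e. $u_0\ge(1-2^{-1/3})^{3/2}=\epsilon$. Hence $T_{S,L}([\epsilon,1]\times[0,1])\subseteq\{0<u\le1\}\cap\{v\ge0\}$; together with the trivial bounds $u_1\le1$ and $v_1\le1$ (the latter from $v_0\le1$ and $\rho_0\le\sqrt2$) the image lies in $[0,1]\times[0,1]$.

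With the orbit now in the upper half-plane I would show it is trapped and funnelled toward $p$. The basic mechanism is the dichotomy: if $\rho_n\ge1$ then $v_{n+1}\ge1/\sqrt2$ and $u_{n+1}\le u_n$, while if $\rho_n<1$ then $u_{n+1}\ge u_n$ and $v_{n+1}\le1/\sqrt2$. Thus a step that could send $v$ below $0$ must be a $\rho_n<1$ step, and the calculation above shows its maximal downward effect $(1-u_n^{2/3})^{3/2}$ stays below $1/\sqrt2$ once $v_n$ is bounded below away from $0$ — which is always the situation just after a $\rho\ge1$ step; whereas if $\rho_n<1$ holds from some point onward then $(u_n)$ is eventually nondecreasing, hence bounded below, so $\rho_n$ is bounded below and the orbit cannot approach the $v$-axis. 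Partitioning the square according to the sign of $\rho_n-1$ and the position of $v_n$ relative to $1/\sqrt2$, and checking the finitely many transitions among the pieces, should yield a forward-invariant compact set $G\ni p$ on which $\liminf u_n>0$.

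Finally, on $G$ I would prove convergence to $p$. Near $p$ the Jacobian of $T_{S,L}$ has eigenvalues $\tfrac12(1\pm i)$, of modulus $1/\sqrt2<1$, which both accounts for the spiralling visible in Figure~\ref{fig:fran-cvgt} and, via Theorem~\ref{th:borweinDR}(1), supplies an attracting neighbourhood $N_p$ of $p$; it then suffices to show every orbit starting in $G$ eventually enters $N_p$. This last step is the main obstacle: one must chase the orbit through the subregions of $G$ and rule out that it locks into a periodic cycle (cf. the two-cycle phenomenon of Figure~\ref{fig:2cyc}) or drifts back toward the $v$-axis — exactly the delicate, case-heavy analysis of \cite{aragon2012global}. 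A single global Lyapunov or contraction argument, which would be far shorter, does not appear available, since $T_{S,L}$ can expand distances far from $p$; and a subsidiary nuisance is that the rectangle $[\epsilon,1]\times[0,1]$ is not itself invariant — its image can have first coordinate marginally below $\epsilon$ — so $G$ must be built by hand rather than taken to be the rectangle.
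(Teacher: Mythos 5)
This theorem is quoted by the survey from \cite[Th.~2.1]{aragon2012global}; the paper you were given contains no proof of it, so the only question is whether your argument stands on its own, and it does not yet. The first stage is correct and is indeed the right way to see where $\epsilon$ comes from: minimising $v_1=1/\sqrt2+(1-1/\rho_0)v_0$ over $v_0\in[0,1]$ at fixed $u_0$ gives $\rho_0=u_0^{2/3}$ and the value $1/\sqrt2-(1-u_0^{2/3})^{3/2}$, which is nonnegative exactly when $u_0\geq(1-2^{-1/3})^{3/2}$. But from there the proposal is a plan rather than a proof. The set $[\epsilon,1]\times[0,1]$ is not forward invariant (as you concede), so the step-one computation does not control later iterates: your stated safeguard that the downward displacement ``stays below $1/\sqrt2$ once $v_n$ is bounded below away from $0$'' is misstated, since the bound $(1-u_n^{2/3})^{3/2}$ depends only on $u_n$ and is useful only while $u_n\geq\epsilon$, which is precisely what can fail after a step with $\rho_n\geq 1$. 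Once $u_n$ drops well below $\epsilon$, a short run of $\rho<1$ steps can drive $v$ negative (e.g.\ from $v\approx 0.12$ and $u$ tiny one gets $v_{n+1}<0$), and ruling this out requires quantitative control of how fast $u_n$ recovers versus how fast $v_n$ decays --- none of which is carried out.

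The same incompleteness affects the final stage. You invoke Theorem~\ref{th:borweinDR}(1) (equivalently the Jacobian eigenvalues $\tfrac12(1\pm i)$ at the feasible point) to get an attracting neighbourhood $N_p$, which is legitimate, but the entire content of the global theorem is then the claim that every orbit from your (still unconstructed) set $G$ eventually enters $N_p$ without cycling or drifting toward the $v$-axis. You explicitly defer this to ``the delicate, case-heavy analysis of \cite{aragon2012global}'', i.e.\ to the very result being proved. Phrases such as ``should yield a forward-invariant compact set $G$'' and ``chase the orbit through the subregions'' name the missing work but do not do it; no monotonicity or Lyapunov-type quantity is exhibited, no finite list of regions with verified transitions is given, and no argument excludes periodic behaviour of the kind you yourself flag. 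As it stands, the submission establishes only the first of the three stages it announces, so the theorem is not proved.
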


The restriction to $\alpha=1/\sqrt{2}$ was largely made for notational simplicity.

In fact, a careful analysis show that the region of convergence is actually larger \cite[Remark~2.12]{aragon2012global}, as illustrated in Figure~\ref{fig:fran-cvgt}.

  \begin{example}[Failure of Douglas--Rachford for a half-line and circle]
  Just replacing a line by a half line in the setting of Borwein--Sims \cite{borwein2011douglas,aragon2012global} is enough to allow complicated periodic behaviour.

Let
 $$A:=S_{\R^2}:=\{x\in\R^2:\|x\|=1\},\quad B:=\{(x_1,0)\in\R^2:x_1\leq a\}.$$
Then
 $$P_Ax = \left\{\begin{array}{ll}
            x/\|x\| & \text{if }x\neq 0, \\
            A & \text{otherwise.} \\
          \end{array}\right.,\quad
   P_Bx = \left\{\begin{array}{ll}
            (x_1,0) & \text{if }x_1\leq a\\
            (a,0)   & \text{otherwise.}
          \end{array}\right.$$
The following holds.

\begin{proposition}
 For each $a\in(0,1)$, there is a $2$-cycle starting at
  $$x_0=\left(a/2,\sqrt{1-a^2}/2\right).$$
\end{proposition}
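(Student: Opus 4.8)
The plan is to verify directly that applying the Douglas--Rachford operator $T_{A,B}$ twice to the point $x_0=\left(a/2,\sqrt{1-a^2}/2\right)$ returns $x_0$, by tracking the four elementary operations $P_A$, $R_A$, $P_B$, $R_B$ through two full iterations. First I would compute $\|x_0\|$: since $x_0=\tfrac12(a,\sqrt{1-a^2})$ and $a\in(0,1)$, we get $\|x_0\|=1/2$, so $P_Ax_0=(a,\sqrt{1-a^2})$ and hence $R_Ax_0=2P_Ax_0-x_0=\tfrac32(a,\sqrt{1-a^2})$. Next I would observe that the first coordinate of $R_Ax_0$ is $3a/2$; I would need to check whether this exceeds $a$ (it does, since $a>0$), so that $P_B$ acts via the ``otherwise'' branch, giving $P_BR_Ax_0=(a,0)$ and $R_BR_Ax_0=2(a,0)-\tfrac32(a,\sqrt{1-a^2})=\left(a/2,\,-\tfrac32\sqrt{1-a^2}\right)$. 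Averaging with $x_0$ then yields
$$x_1=T_{A,B}x_0=\frac{x_0+R_BR_Ax_0}{2}=\left(\frac a2,\,-\frac{\sqrt{1-a^2}}{2}\right).$$

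The key structural point is now that $x_1$ is the reflection of $x_0$ across the $x_1$-axis, and both $A$ and the relevant branch-behaviour of $B$ respect this symmetry: $P_A$ commutes with reflection in the first coordinate, and the second coordinate of any point in the range of $P_B$ is always $0$, so the whole composition $R_BR_A$ intertwines the two reflected orbits. Concretely I would repeat the computation: $\|x_1\|=1/2$ again, so $P_Ax_1=(a,-\sqrt{1-a^2})$, $R_Ax_1=\tfrac32(a,-\sqrt{1-a^2})$, whose first coordinate is again $3a/2>a$, so $P_BR_Ax_1=(a,0)$ and $R_BR_Ax_1=\left(a/2,\tfrac32\sqrt{1-a^2}\right)$; averaging with $x_1$ gives $T_{A,B}x_1=\left(a/2,\sqrt{1-a^2}/2\right)=x_0$. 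Hence $\{x_0,x_1\}$ is a genuine $2$-cycle, and it is a nontrivial one precisely because $\sqrt{1-a^2}/2\neq -\sqrt{1-a^2}/2$ when $a<1$.

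The only genuine obstacle is making sure every branch choice in the piecewise definitions of $P_A$ and $P_B$ is the intended one at each of the (at most four) points where a projection is evaluated. For $P_A$ this just requires each argument to be nonzero, which is immediate since every argument encountered has norm $1/2$ or $3/2$; for $P_B$ it requires the first coordinate of $R_Ax_0$ and of $R_Ax_1$ to exceed $a$, which reduces to the single inequality $3a/2>a$, valid for all $a>0$. I would also note for completeness that the condition $a\in(0,1)$ (rather than merely $a>0$) is used to guarantee $\|x_0\|<1$, i.e.\ that $x_0$ is genuinely infeasible and $\sqrt{1-a^2}$ is real and nonzero, so the cycle does not degenerate to a fixed point. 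Everything else is routine arithmetic, and a brief diagram of the square orbit $x_0\to R_Ax_0\to R_BR_Ax_0\to x_1\to\cdots$ would make the symmetry transparent.
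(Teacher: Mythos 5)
Your proposal is correct and follows essentially the same route as the paper: compute $\|x_0\|=1/2$, get $R_Ax_0=3x_0$, note $3a/2>a$ so $P_BR_Ax_0=(a,0)$, and conclude $T_{A,B}x_0=(a,0)-x_0=\left(a/2,-\sqrt{1-a^2}/2\right)$, with the return to $x_0$ following by the reflection symmetry in the horizontal axis (the paper simply says ``by symmetry'' where you write out the second iteration explicitly). Your branch checks and the remark that $a<1$ prevents degeneration to a fixed point are correct but routine additions.
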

\begin{proof}
 Since $\|x_0\|=\frac{1}{2}$,
  $$R_Ax_0 = 2\frac{x_0}{\|x_0\|}-x_0=3x_0.$$
 Since $(R_Ax_0)_1=3a/2>a$, $P_BR_Ax=(a,0)$ and hence
  $$T_{A,B}x_0 = \frac{x_0+2(a,0)-3x_0}{2}=(a,0)-x_0 = \left(a/2,-\sqrt{1-a^2}/2\right).$$
By symmetry, $T_{A,B}^2x_0=x_0$.
\end{proof}

If we replace $B$ by the singleton $\{(a,0)\}$ or the doubleton $\{(a,0),(-1,0)\}$ we obtain the same two-cycle.
The case of a singleton shows the need for $A$ to be non-trivial in Corollary \ref{cor:DR}.

This cycle is illustrated in Figure \ref{fig:2cyc} for $a=4/5$ which leads to a rational cycle. For points near the cycle, the iteration generates remarkably subtle limit cycles as shown in  Figure~\ref{fig:orbit}.\footnote{See \url{http://carma.newcastle.edu.au/DRmethods/comb-opt/2cycle.html} for an animated version.}
 \qede
\end{example}

 \begin{figure*}
 \begin{center}
  \includegraphics[width=0.4\textwidth]{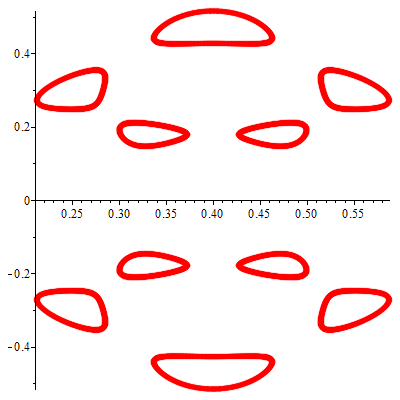}
    \caption{The orbit starting at $(.49,.21)$.}\label{fig:orbit}
 \end{center}
\end{figure*}

\bigskip

  In \cite{hesse2012nonconvex}, Hesse and Luke utilize  \emph{$(S,\epsilon)$-(firm) nonexpansiveness}, a relaxed local version of (firm) nonexpansiveness, a notion which quantifies how ``close" to being (firmly) nonexpansive a mapping is. Together with a coercivity condition, and appropriate notions of super-regularity and linear strong regularity, their framework can be utilized to prove local convergence of the Douglas--Rachford scheme, if the first reflection is performed with respect to a subspace, see~\cite[Th.~42]{hesse2012nonconvex}. The order of reflection is reversed, so the results of Hesse and Luke do not directly overlap with that of Arag\'on, Borwein and Sims. This is not a substantive difference.
%
%

\begin{remark}
 Recently Bauschke, Luke, Phan and Wang \cite{mapsparse} obtained local convergence results for a simpler algorithm, \emph{von Neumann's alternating projection method (MAP)}, applied to sparsity optimization with affine constraints --- a form of combinatorial optimization (Sudoku, for example, can be modelled in this framework \cite{babu2010linear}). In practice, however, our experience is that MAP often fails to converge satisfactorily when applied to these problems. \qede
\end{remark}

 \subsection{A summary of applications}\label{ssec:list}

We briefly mention a variety of highly non-convex, primarily combinatorial, problems where some form of Douglas--Rachford algorithm has proven very fruitful.

  \begin{enumerate}
  \item \emph{Protein folding} and \emph{graph coloring} problems were first studied via Douglas--Rachford methods in \cite{elser2006deconstructing} and \cite{elser2007searching}, respectively.
   \item \emph{Image retrieval} and \emph{phase reconstruction} problems are analyzed in some detail in \cite{bauschke2002phase,bauschke2003hybrid}. The \emph{bit retrieval} problem is considered in \cite{elser2007searching}.
   \item The \emph{$N$-queens problem}, which requests the placement of $N$ queens on a $N \times N$ chessboard, is studied and solved in \cite{schaad2010modeling}.
     \item \emph{Boolean satisfiability} is treated in \cite{elser2007searching,gravel2008divide}. Note that the three variable case, \emph{3-SAT}, was the first problem to be shown \emph{NP-complete} \cite{np}.
   \item \emph{TetraVex}\footnote{Also known as \emph{McMahon Squares} in honour of the great English combinatorialist, Percy MacMahon, who examined them nearly a century ago.} is an edge-matching puzzle (see Figure~\ref{fig:tetravex}), whose NP-completeness is discussed in \cite{takenaga2009tetravex}, was studied in \cite{bansal2010code}.\footnote{Pulkit Bansal did this as a 2010 NSERC summer student with Heinz Bauschke and Xianfu Wang.} Problems up to size $4\times 4$ could be solved in an average of $200$ iterations. There are $10^{2n(n+1)}$ base-$10$ $n\times n$ boards, with $n=3$ being the most popular.

\begin{figure*}
 \begin{center}
  \includegraphics[width=0.75\textwidth]{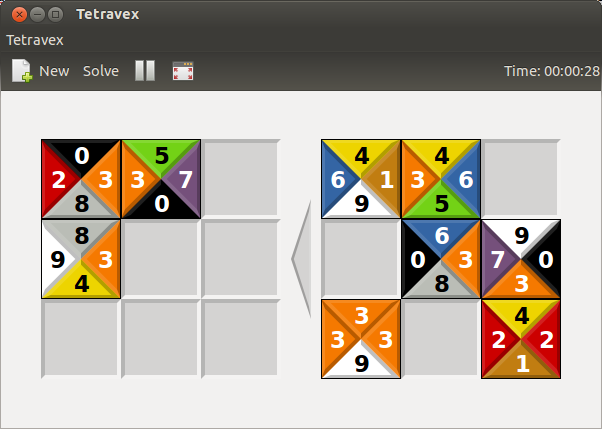}
  \caption{A game of $3\times 3$ TetraVex  being played in \emph{GNOME TetraVex}. Square tiles on the right board must be moved  to the left board so that all touching numbers agree.} \label{fig:tetravex}
 \end{center}
\end{figure*}

   \item Solutions of (very large) \emph{Sudoku puzzles} have been studied in  \cite{schaad2010modeling,elser2007searching}. For a discussion of NP-completeness of determining solvability of Sudokus see \cite{takayuki2003complexity}. The effective solution of Suduko puzzles forms the basis of Section~\ref{sec:sudoku}.
       \item \emph{Nonograms} \cite{nono1,nono2} are a more recent NP-complete Japanese puzzle whose solution by Douglas--Rachford methods is described in Section~\ref{sec:vis}.\footnote{Japanese, being based on ideograms, does not lead itself to anagrams, crosswords or other word puzzles; this in part explains why so many good numeric and combinatoric games originate in Japan.}
  \end{enumerate}

\section{Successful combinatorial applications} \label{sec:app}
 The key to successful application is two-fold.

 First, the iteration must converge---at least with high probability. Our experience is when that happens, random restarts in case of failure are very fruitful.  As we shall show, often this depends on making good decisions about how to model the problem.

  Second, one must be able to compute the requisite projections in closed form---or to approximate them efficiently numerically. As we shall indicate this is frequently possible for significant problems.

  When these two events obtain, we are in the pleasant position of being able to lift much of our experience as continuous optimizers to the combinatorial milieu.

\subsection{Model formulation}\label{ssec:model}
Within the framework of feasibility problems, there can be numerous ways to model a given type of problem. The product space formulation (\ref{eq:productfeasibilityproblem}) gives one example, even without assuming any additional knowledge of the underlying problem.

The chosen formulation heavily influences the performance of projection algorithms. For example, in initial numerical experiments, the cyclic Douglas--Rachford scheme of Section \ref{sssec:cdr}, was directly applied to (\ref{eq:feasibilityproblem}). As a serial algorithm, it seems to outperform the classic Douglas--Rachford scheme, which must instead be applied to in the product space (\ref{eq:productfeasibilityproblem}). For details see \cite{cycdr}.

As a heuristic for problems involving one or more non-convex set, the sensitivity of the  Douglas--Rachford method to the formulation used must be emphasized. In the (continuous) convex setting, the formulation influences performance of the algorithm, while in the combinatorial setting, the formulation determines whether or not the algorithm can successfully and reliably solve the problem at hand. Direct applications to feasibility problems with integer constraints have been largely unsuccessful. On the other hand, many of the successful applications outlined in Section~\ref{ssec:list} use binary formulations.


We now outline the basic idea behind these reformations. If
\begin{equation}\label{eq:reformulation1}
 x\in\{c_1,c_2,\dots,c_n\}\subset\R.
\end{equation}
We reformulate $x$ as a vector $y\in\R^n$. If $x=c_i$, then $y=(y_1,\ldots,y_n)$ is defined by
 $$y_j = \left\{\begin{array}{ll}
          1 & \text{if }j=i, \\
          0 & \text{otherwise.}
         \end{array}\right.$$
With this interpretation (\ref{eq:reformulation1}) is equivalent to:
 $$y\in\{e_1,e_2,\dots,e_n\}\subset\R^n,$$
with $y=e_i$ if and only if $x=c_i$.

Choosing $c_1,c_2,\dots,c_n\in\mathbb Z$ takes care of the integer case.

\subsection{Projection onto the set of permutations of points}

In many situations, in order to apply the Douglas--Rachford iteration, one needs to compute the projection of a point $x=(x_1,\ldots,x_n)\in\R^n$ onto the set of permutations of $n$ given points $c_1,\ldots,c_n\in\R$, 
 a set that will be denoted by $\C$.
 We shall see below that this is the case for the Sudoku puzzle.

 As we show next, the projection can be easily and efficiently computed.
In what follows, given $y\in\mathbb R^n$, we will denote by $[y]$ the vector with the same components permuted in nonincreasing order. We need the following classical rearrangement inequality, see~\cite[Th.~368]{HLP52}.

\begin{theorem}[Hardy--Littlewood--P\'olya]\label{th:HLP}
 Any $x,y\in\R^n$ satisfy
  $$x^Ty \leq [x]^T[y].$$
\end{theorem}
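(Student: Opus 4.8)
The statement to prove is the Hardy--Littlewood--P\'olya rearrangement inequality: for any $x,y\in\R^n$, one has $x^Ty\leq[x]^T[y]$, where $[v]$ denotes the components of $v$ sorted in nonincreasing order. The plan is to reduce the general case to a finite optimization over permutations and then argue by a local exchange (bubble-sort) argument. Concretely, fix $y$ and observe that $x^Ty=\sum_i x_i y_i$ depends only on how the entries of $x$ are matched with the entries of $y$; permuting the coordinates of $x$ by $\sigma$ changes the sum to $\sum_i x_{\sigma(i)}y_i$. Since there are only finitely many permutations $\sigma\in S_n$, the quantity $\sum_i x_{\sigma(i)}y_i$ attains a maximum at some $\sigma^\ast$. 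It therefore suffices to show that this maximum equals $[x]^T[y]$, i.e. that the matched-and-sorted pairing is optimal.

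The key step is the exchange argument. Suppose, for a maximizing arrangement, there exist indices $p<q$ (after sorting $y$ into nonincreasing order, so $y_p\geq y_q$) with the paired $x$-values out of order, say $a$ paired with $y_p$ and $b$ paired with $y_q$ but $a<b$. Swapping them changes the contribution by
\begin{equation*}
(b y_p + a y_q) - (a y_p + b y_q) = (b-a)(y_p-y_q) \geq 0.
\end{equation*}
Hence swapping does not decrease the sum, and after finitely many such swaps (this is exactly a sorting procedure, which terminates) every out-of-order pair is eliminated, so the sorted-against-sorted pairing is a maximizer; its value is precisely $[x]^T[y]$. Since the identity pairing $x^Ty$ is one of the arrangements, it is bounded above by the maximum, giving $x^Ty\leq[x]^T[y]$.

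There is one bookkeeping subtlety worth flagging as the main obstacle: one must be slightly careful that the maximum over $S_n$ really is realized by \emph{simultaneously} sorting both vectors, rather than just by some permutation. The clean way to handle this is to first relabel coordinates so that $y$ is already nonincreasing (a permutation of coordinates changes neither side of the desired inequality, since $[x]$ and $[y]$ are invariant under permuting coordinates), and then run the exchange argument only on the $x$-side. Ties (repeated values in $x$ or $y$) cause no difficulty: when $y_p=y_q$ the swap leaves the sum unchanged, and when $a=b$ there is nothing to swap. With $y$ fixed in sorted order, the argument shows $\max_{\sigma}\sum_i x_{\sigma(i)}y_i=\sum_i [x]_i y_i=[x]^T[y]$, and since $x^Ty$ corresponds to $\sigma=\mathrm{id}$, the inequality follows. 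A remark could note that the same exchange argument run in the opposite direction yields the companion lower bound $x^Ty\geq[x]^T[\overline{y}]$ with $\overline{y}$ sorted nondecreasingly, which is the form actually needed for computing the projection onto $\C$.
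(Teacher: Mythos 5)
Your proof is correct. Note, however, that the paper does not prove this statement at all: it is quoted as a classical result with a citation to Hardy--Littlewood--P\'olya \cite[Th.~368]{HLP52}, so any self-contained argument is necessarily ``different'' from what the paper does. Your route --- relabel coordinates (applying the same permutation to both vectors, which leaves both $x^Ty$ and $[x]^T[y]$ unchanged) so that $y$ is nonincreasing, then maximize $\sum_i x_{\sigma(i)}y_i$ over the finite group $S_n$ and use the exchange step $(by_p+ay_q)-(ay_p+by_q)=(b-a)(y_p-y_q)\geq 0$ to show the sorted-against-sorted pairing attains the maximum --- is the standard elementary proof, and the handling of ties and termination is fine. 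One small quibble with your closing remark: the companion lower bound $x^Ty\geq [x]^T[\overline{y}]$ is \emph{not} what the paper needs; in the proof of the proposition on permutation projections the distance $\|x-c\|^2=\|x\|^2+\|c\|^2-2x^Tc$ is minimized over $c\in\C$ precisely by \emph{maximizing} $x^Tc$, so it is the stated upper bound $x^Tc\leq[x]^T[c]$ that is invoked. This does not affect the validity of your proof of the theorem itself.
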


%
%
Fix $x\in\R^n$. Denote by $[\C]_x$ the set of vectors in $\C$ (which therefore have the same components but perhaps permuted) such that $y\in [\C]_x$ if the $i$th largest entry of $y$ has the same index in $y$ as the $i$th largest entry of $x$. As a consequence of Theorem~\ref{th:HLP}, one has the following.

\begin{proposition}[Projections on permutations]\label{prop:permutationprojections}
 Denote by $\C \subset\R^n$ the set of vectors whose entries are all permutations of $c_1,c_2,\dots,c_n\in\R$. Then for any $x\in\R^n$,
  $$P_{\C}x=[\C]_x.$$
\end{proposition}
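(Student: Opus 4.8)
The plan is to show the two inclusions $P_{\C}x \supseteq [\C]_x$ and $P_{\C}x \subseteq [\C]_x$ by turning the minimization of $\|x-y\|^2$ over $y\in\C$ into a maximization of the inner product $x^Ty$, and then applying the Hardy--Littlewood--P\'olya rearrangement inequality (Theorem~\ref{th:HLP}).

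First I would observe that, since every $y\in\C$ is a permutation of the fixed list $c_1,\dots,c_n$, the quantities $\|y\|^2 = \sum_{i=1}^n c_i^2$ and (trivially) $\|x\|^2$ do not depend on the choice of $y\in\C$. Hence, expanding
$$\|x-y\|^2 = \|x\|^2 - 2x^Ty + \|y\|^2,$$
minimizing $\|x-y\|^2$ over $y\in\C$ is equivalent to maximizing $x^Ty$ over $y\in\C$. Next, I would note that $[x]^T[y]$ depends only on the multiset of entries of $y$, so for every $y\in\C$ we have $[y]=[\mathbf c]$ where $\mathbf c=(c_1,\dots,c_n)$, and therefore $[x]^T[y]=[x]^T[\mathbf c]$ is a constant over $\C$. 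Theorem~\ref{th:HLP} then gives $x^Ty \le [x]^T[\mathbf c]$ for all $y\in\C$, so this constant is an upper bound on the objective.

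The key step is to identify exactly when equality $x^Ty = [x]^T[\mathbf c]$ is attained. I would argue that equality holds precisely when $y$ sorts ``in the same order'' as $x$: if the $i$th largest entry of $y$ sits in the same coordinate as the $i$th largest entry of $x$, then reading off both vectors in the order that sorts $x$ decreasingly gives $x^Ty = [x]^T[y] = [x]^T[\mathbf c]$, so such a $y$ attains the bound; this shows $[\C]_x \subseteq P_{\C}x$, and in particular $[\C]_x$ is nonempty so the maximum is $[x]^T[\mathbf c]$. Conversely, if $y\in\C$ attains the maximum, then $x^Ty=[x]^T[y]$, and I would invoke the equality case of the rearrangement inequality to conclude that $y$ must be co-monotone with $x$ in this sense, i.e.\ $y\in[\C]_x$. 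One mild technical point to handle carefully here is ties: when $x$ (or $\mathbf c$) has repeated entries, ``the $i$th largest entry'' and hence $[\C]_x$ must be read so that any consistent choice of ordering among equal entries is permitted, and the equality analysis must accommodate this --- this bookkeeping around ties is the main obstacle, though it is not deep. Assembling the two inclusions yields $P_{\C}x=[\C]_x$.
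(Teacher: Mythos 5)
Your proposal is correct and follows essentially the same route as the paper: expand $\|x-c\|^2$, use that $\|c\|$ is constant over $c\in\C$, and apply the Hardy--Littlewood--P\'olya inequality (Theorem~\ref{th:HLP}) to conclude that the points of $[\C]_x$ attain the minimum. You in fact go a little further than the paper's proof, which stops once it has shown that every $y\in[\C]_x$ is a closest point (i.e.\ $[\C]_x\subseteq P_{\C}x$); your sketch of the reverse inclusion via the equality case of the rearrangement inequality, together with the caveat about ties, addresses a direction the paper leaves implicit.
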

\begin{proof}
For any $c\in \C$,
\begin{align*}
 \|x-c\|^2
 &= \|x\|^2+\|c\|^2 - 2x^Tc \\
 &= \|[x]\|^2 + \|[c]\|^2- 2x^Tc \\
 &\geq \|[x]\|^2 + \|[c]\|^2- 2[x]^T[c] \\
 &= \|[x]-[c]\|^2\\
 &=\|x-y\|^2,\text{ for }y\in[\C]_x.
\end{align*}
 This completes the proof.
\end{proof}

\begin{remark} \label{remark:01projections}
In particular, taking $c_1=1$ and $c_2=c_3=\dots=c_n=0$ one has
 $$\C=\{e_1,e_2,\dots,e_n\},$$
where $e_i$ denotes the $i$th standard basis vector; whence
 $$P_{\C}(x) = \{e_i:x_i=\max\{x_1,x_2,\dots,x_n\}\}.$$
 A direct proof of this special case is given in \cite[Section~5.9]{schaad2010modeling}. \qede
\end{remark}


\begin{remark}\label{remark:algorithm}
Proposition~\ref{prop:permutationprojections} suggests the following algorithm for computing a projection of $x$ onto $\C$. Since the projection, in general, is not unique, we are content with finding the nearest point, $p$, in the set of projections or some other reasonable surrogate.

For convenience, given a vector $y\in(\R^2)^n$, we denote the projections onto the first and second product coordinates by $Q$ and $S$, respectively. That is, if
 $$y=((x_1,c_2),(x_2,c_2),\dots,(x_n,c_n))\in(\R^2)^n,$$
then
 $$Qy:=(x_1,x_2,\dots,x_n),\qquad Sy:=(c_1,c_2,\dots,c_n).$$
 We can now can now  state the following:

\begin{alg}[Projection]
Input: $x\in\R^n$ and $c_1,c_2,\dots,c_n\in\R$.

\begin{enumerate}

 \item By relabelling if necessary, assume $c_i\leq c_{i+1}$ for each $i$.
 \item Set $y=((x_1,c_2),(x_2,c_2),\dots,(x_n,c_n))\in(\R^2)^n$.
 \item Set $z$ to be a vector with the same components as $y$ permuted such that $Qz$ is in non-increasing order.
 \item Output: $p=Sy$.
\end{enumerate}
\end{alg}

In our experience many projections required in combinatorial settings have this level of simplicity. \qede
\end{remark}

\section{Solving Sudoku puzzles} \label{sec:sudoku}
We now demonstrate the reformulation described in Section~\ref{sec:app} with Sudoku, modelled first as an integer feasibility problem, and secondly as a binary feasibility problem.

We introduce some notation. Denote by $A[i,j]$, the $(i,j)$-th entry of the matrix $A$. Denote by $A[i{\,:\,}i',j{\,:\,}j']$ the submatrix of $A$ formed by taking rows $i$ through $i'$ and columns $j$ through $j'$ (inclusive). When $i$ and $i'$ are the indices of the first and last rows, we abbreviate by $A[:,j:j']$. We abbreviate similarly for the column indices. The \emph{vectorization} of the matrix $A$ by columns, is denoted by $\vect A$. For multidimensional arrays, the notation extends in the obvious way.

Let $S$ denote the partially filled $9\times 9$ integer matrix representing the incomplete Sudoku. For convenience, let $I=\{1,2,\dots,9\}$ and let $J\subseteq I^2$ be the set of indices for which $S$ is filled.

Whilst we will formulate the problem for $9\times 9$ Sudoku, we note that the same principles can be applied to larger Sudoku puzzles.

\subsection{Sudoku modelled as integer program}\label{ssec:int}
Sudoku is modelled as an integer feasibility problem in the obvious way. Denote by $\C$, the set of vectors which are permutations of $1,2,\dots,9$. Let $A\in\R^{9\times 9}$. Then $A$ is a completion of $S$ if and only if
 $$A\in C_1\cap C_2\cap C_3\cap C_4,$$
where
 \begin{align*}
  C_1 & = \{A:A[i,:]\in \C\text{ for each }i\in I\},\\
  C_2 & = \{A:A[:,j]\in \C\text{ for each }j\in I\},\\
  C_3 & = \{A:\vect A[3i+1:3(i+1),3j+1:3(j+1)]\in \C\text{ for }i,j=0,1,2\},\\
  C_4 & = \{A:A[i,j]=S[i,j]\text{ for each }(i,j)\in J\}.
 \end{align*}

The projections onto $C_1,C_2,C_3$ are given by Proposition~\ref{prop:permutationprojections}, and can be efficiently computed by using the algorithm outlined in Remark~\ref{remark:algorithm}. The projection onto $C_4$ is given, pointwise, by
  $$(P_{C_4}A)[i,j] = \left\{\begin{array}{ll}
                       S[i,j] &\text{if }(i,j)\in J,\\
                       A[i,j] &\text{otherwise;}\\
                      \end{array}\right.$$
for each $(i,j)\in I^2$.

\subsection{Sudoku modelled as a zero-one program}\label{ssec:bin}
Denote by $\C$, the set of all $n$-dimensional standard basis vectors. To model Sudoku as a binary feasibility problem, we define $B\in\R^{9\times 9\times 9}$ by
 $$B[i,j,k] = \left\{\begin{array}{ll}
               1 & \text{if }A[i,j]=k,\\
               0 & \text{otherwise}.
              \end{array}\right.$$

Let $S'$ denote the partially filled $9\times 9\times 9$ zero-one array representing the incomplete Sudoku, $S$, under the reformulation, and let $J'\subseteq I^3$ be the set of indices for which $S'$ is filled.

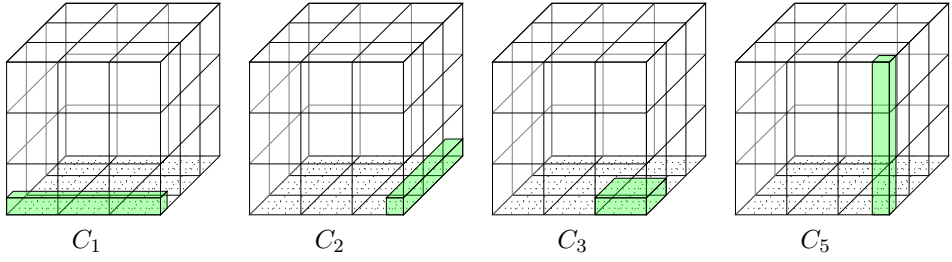
\begin{figure*}
 \begin{center}
  \scalebox{0.9}{\begin{tabular}{cccc} 
 \begin{tikzpicture}[scale=0.25]
  \draw (0,0,0) -- (0,0,9) -- (9,0,9) -- (9,0,0) -- cycle;
  \foreach \i  in {3,6} {
    \draw (\i,0,0) -- (\i,0,9);
    \draw (0,0,\i) -- (9,0,\i);
  }
  \foreach \i  in {1,2,4,5,7,8} {
    \draw[dotted] (\i,0,0) -- (\i,0,9);
    \draw[dotted] (0,0,\i) -- (9,0,\i);
  }
  \foreach \i in {0,3,6} {
    \draw[color=black!50] (0,0,\i) -- (0,9,\i);
    \draw[color=black!50] (\i,0,0) -- (\i,9,0);
    \draw[color=black!50] (9,\i,0) -- (0,\i,0) -- (0,\i,9);
  }
  \foreach \i in {0,3,6,9} { \foreach \j in {9} { \draw (\i,0,9) -- (\i,\j,9) -- (\i,\j,0); } }
  \foreach \i in {0,3,6,9} { \foreach \j in {9} { \draw (9,0,\i) -- (9,\j,\i) -- (0,\j,\i); } }
  \foreach \i in {3,6} { \draw (9,\i,0) -- (9,\i,9) -- (0,\i,9); }
  \draw (0,9,0) -- (9,9,0) -- (9,9,9) -- (0,9,9) -- cycle;
  \draw[fill=green, fill opacity=0.3] (9,1,9) -- (9,1,8) -- (0,1,8) -- (0,1,9) -- cycle;
  \draw[fill=green, fill opacity=0.3] (9,0,9) -- (9,1,9) -- (0,1,9) -- (0,0,9) -- cycle;
  \draw[fill=green, fill opacity=0.3] (9,0,9) -- (9,1,9) -- (9,1,8) -- (9,0,8) -- cycle;
\end{tikzpicture}  
 &
 \begin{tikzpicture}[scale=0.25]
  \draw (0,0,0) -- (0,0,9) -- (9,0,9) -- (9,0,0) -- cycle;
  \foreach \i  in {3,6} {
    \draw (\i,0,0) -- (\i,0,9);
   \draw (0,0,\i) -- (9,0,\i);
  }
  \foreach \i  in {1,2,4,5,7,8} {
    \draw[dotted] (\i,0,0) -- (\i,0,9);
    \draw[dotted] (0,0,\i) -- (9,0,\i);
  }
  \foreach \i in {0,3,6} {
    \draw[color=black!50] (0,0,\i) -- (0,9,\i);
    \draw[color=black!50] (\i,0,0) -- (\i,9,0);
    \draw[color=black!50] (9,\i,0) -- (0,\i,0) -- (0,\i,9);
  }
  \foreach \i in {0,3,6,9} { \foreach \j in {9} { \draw (\i,0,9) -- (\i,\j,9) -- (\i,\j,0); } }
  \foreach \i in {0,3,6,9} { \foreach \j in {9} { \draw (9,0,\i) -- (9,\j,\i) -- (0,\j,\i); } }
  \foreach \i in {3,6} { \draw (9,\i,0) -- (9,\i,9) -- (0,\i,9); }
  \draw (0,9,0) -- (9,9,0) -- (9,9,9) -- (0,9,9) -- cycle;
  \draw[fill=green, fill opacity=0.3] (9,1,9) -- (8,1,9) -- (8,1,0) -- (9,1,0) -- cycle;
  \draw[fill=green, fill opacity=0.3] (9,0,9) -- (9,1,9) -- (9,1,0) -- (9,0,0) -- cycle;
  \draw[fill=green, fill opacity=0.3] (9,0,9) -- (9,1,9) -- (8,1,9) -- (8,0,9) -- cycle;
\end{tikzpicture}
& 
 \begin{tikzpicture}[scale=0.25] 

  \draw (0,0,0) -- (0,0,9) -- (9,0,9) -- (9,0,0) -- cycle;
  \foreach \i  in {3,6} {
    \draw (\i,0,0) -- (\i,0,9);
    \draw (0,0,\i) -- (9,0,\i);
  }
  \foreach \i  in {1,2,4,5,7,8} {
    \draw[dotted] (\i,0,0) -- (\i,0,9);
    \draw[dotted] (0,0,\i) -- (9,0,\i);
  }
  \foreach \i in {0,3,6} {
    \draw[color=black!50] (0,0,\i) -- (0,9,\i);
    \draw[color=black!50] (\i,0,0) -- (\i,9,0);
    \draw[color=black!50] (9,\i,0) -- (0,\i,0) -- (0,\i,9);
  }
  \foreach \i in {0,3,6,9} { \foreach \j in {9} { \draw (\i,0,9) -- (\i,\j,9) -- (\i,\j,0); } }
  \foreach \i in {0,3,6,9} { \foreach \j in {9} { \draw (9,0,\i) -- (9,\j,\i) -- (0,\j,\i); } }
  \foreach \i in {3,6} { \draw (9,\i,0) -- (9,\i,9) -- (0,\i,9); }
  \draw (0,9,0) -- (9,9,0) -- (9,9,9) -- (0,9,9) -- cycle;
  \draw[fill=green, fill opacity=0.3] (9,0,9) -- (9,1,9) -- (9,1,6) -- (9,0,6) -- cycle;
  \draw[fill=green, fill opacity=0.3] (9,0,9) -- (9,1,9) -- (6,1,9) -- (6,0,9) -- cycle;
  \draw[fill=green, fill opacity=0.3] (9,1,9) -- (6,1,9) -- (6,1,6) -- (9,1,6) -- cycle;
\end{tikzpicture} 
 &
  \begin{tikzpicture}[scale=0.25] 
  \draw (0,0,0) -- (0,0,9) -- (9,0,9) -- (9,0,0) -- cycle;
  \foreach \i  in {3,6} {
    \draw (\i,0,0) -- (\i,0,9);
    \draw (0,0,\i) -- (9,0,\i);
  }
  \foreach \i  in {1,2,4,5,7,8} {
    \draw[dotted] (\i,0,0) -- (\i,0,9);
    \draw[dotted] (0,0,\i) -- (9,0,\i);
  }
  \foreach \i in {0,3,6} {
    \draw[color=black!50] (0,0,\i) -- (0,9,\i);
    \draw[color=black!50] (\i,0,0) -- (\i,9,0);
    \draw[color=black!50] (9,\i,0) -- (0,\i,0) -- (0,\i,9);
  }
  \foreach \i in {0,3,6,9} { \foreach \j in {9} { \draw (\i,0,9) -- (\i,\j,9) -- (\i,\j,0); } }
  \foreach \i in {0,3,6,9} { \foreach \j in {9} { \draw (9,0,\i) -- (9,\j,\i) -- (0,\j,\i); } }
  \foreach \i in {3,6} { \draw (9,\i,0) -- (9,\i,9) -- (0,\i,9); }
  \draw (0,9,0) -- (9,9,0) -- (9,9,9) -- (0,9,9) -- cycle;
  \draw[fill=green, fill opacity=0.3] (9,0,9) -- (9,0,8) -- (9,9,8) -- (9,9,9) -- cycle;
  \draw[fill=green, fill opacity=0.3] (9,0,9) -- (8,0,9) -- (8,9,9) -- (9,9,9) -- cycle;
  \draw[fill=green, fill opacity=0.3] (9,9,9) -- (8,9,9) -- (8,9,8) -- (9,9,8) -- cycle;
\end{tikzpicture}
 
 \\ $C_1\qquad$ & $C_2\qquad$ & $C_3\qquad$ & $C_5\qquad$ \\
\end{tabular}}
  \caption{Visualization of $B$ showing  constraints used in Sudoku modelled as a zero-one program. Green ``blocks" are all ``$0$", save for a single ``$1$".} \label{fig:sudoku-constraints}
 \end{center}
\end{figure*}

The four constraints of the previous section become
 \begin{align*}
  C_1 &= \{B:B[i,:,k]\in \C\text{ for each }i,k\in I\},\\
  C_2 &= \{B:B[:,j,k]\in \C\text{ for each }j,k\in I\},\\
  C_3 &= \{B:\vect B[3i+1:3(i+1),3j+1:3(j+1),k]\in \C\\\ &\quad \text{ for }i,j=0,1,2\text{ and }k\in I\},\\
  C_4 &= \{B:B[i,j,k]=1\text{ for each }(i,j,k)\in J'\}.\\
  \intertext{In addition, since each Sudoku square has precisely one entry, we require}
  C_5 &= \{B:B[i,j,:]\in \C\text{ for each }i,j\in I\}.
 \end{align*}
A visualization of the constraints is provided in Figure~\ref{fig:sudoku-constraints}.

Clearly there is a one-to-one correspondence between completed integer Sudokus, and zero-one arrays contained in the intersection of the five constraint sets. Moreover, $B$ is a completion of $S'$ if and only if
 $$B\in C_1\cap C_2\cap C_3\cap C_4\cap C_5.$$

The projections onto $C_1,C_2,C_3,C_5$ are given in Remark~\ref{remark:01projections}. The projection onto $C_4$ is given, pointwise, by
  $$(P_{C_4}B)[i,j,k] = \left\{\begin{array}{ll}
                         S[i,j,k] &\text{if }(i,j,k)\in J',\\
                         B[i,j,k] &\text{otherwise;}\\
                        \end{array}\right.$$
for each $(i,j,k)\in I^3$.

\subsection{Numerical experiments}\label{ssec:successformulation}

We have tested various large suites of Sudoku puzzles on the method of Section \ref{ssec:bin}.    We give some details regarding our implementation in C++.

\begin{itemize}
 \item \emph{Initialize:} Set $\mathbf x_0 := (y,y,y,y,y)\in D$ for some random $y\in[0,1]^{9\times 9\times 9}$.
 \item \emph{Iteration:} Set $\mathbf x_{n+1}:=T_{D,C}\mathbf x_n$.
 \item \emph{Terminate:} Either, if a solution is found, or if $10000$ iterations have been performed. More precisely, if $\operatorname{round}(P_D\mathbf x_n)$ denotes $P_D\mathbf x_n$ pointwise rounded to the nearest integer, then $\operatorname{round}(P_D\mathbf x_n)$ is a solution if
                          \begin{equation}
                           \operatorname{round}(P_D\mathbf x_n)\in C\cap D. \label{eq:terminate}
                          \end{equation}
\end{itemize}

\begin{remark}
In our implementation condition (\ref{eq:terminate}) was used a termination criterion, instead of the condition
 $$P_D\mathbf x_n \in C\cap D.$$
This improvement is due the following observation: If $P_D\mathbf x_n$ is a solution then all entries are either $0$ or $1$. \qede
\end{remark}

Since the Douglas--Rachford method produces a point whose projection onto $D$ is a solution, we also consider a variant which sets
$$
\mathbf x_{n+1}:=\left\{\begin{array}{ll}P_DT_{D,C}\mathbf x_n,
&\text{if } n\in\{400,800,1600,3200,6400\};\\
T_{D,C}\mathbf x_n, &\text{otherwise.}\end{array}\right.
$$
We will refer to this variant as \emph{DR+Proj}.

\subsubsection{Test library experience}\label{sssec:lib}
We considered Sudokus from the following libraries:
\begin{itemize}
 \item Dukuso's \texttt{top95}\footnote{\texttt{top95}: \url{http://magictour.free.fr/top95}} and \texttt{top1465}\footnote{\texttt{top1465}: \url{http://magictour.free.fr/top1465}} -- collections containing 95 and 1465 test problems, respectively. They are frequently used by programmers to test their solvers. All instances are $9\times 9$.
 \item Gordon Royle's minimum Sudoku\footnote{Gordon Royle: \url{http://school.maths.uwa.edu.au/~gordon/sudokumin.php}} -- a collection containing around 50000 distinct Sudokus with 17 entries (the best known lower bound on the number of entries required for a unique solution). All instances are $9\times 9$. Our experiments were performed on the first 1000 problems. From herein we refer to these instances as \texttt{minimal1000}.
 \item \texttt{reglib-1.3}\footnote{\texttt{reglib-1.3}: \url{http://hodoku.sourceforge.net/en/libs.php}} -- a collection containing around 1000 test problems, each suited to a particular human-style solving technique. All instances are $9\times 9$.
 \item \texttt{ksudoku16} and \texttt{ksudoku25}\footnote{\texttt{ksudoku16/25}: \url{http://carma.newcastle.edu.au/DRmethods/comb-opt/}} -- collections containing around 30 Sudokus, of various difficulties, which we generated using \emph{KSudoku}.\footnote{KSudoku: \url{http://games.kde.org/game.php?game=ksudoku}} The collections contain $16\times 16$ and $25\times 25$ instances, respectively.
 \end{itemize}

\subsubsection{Methods used for comparison}\label{sssec:alg}
Our \emph{naive} binary implementation was compared with various specialized or optimized codes. A brief description of the  methods tested follows.
 \begin{enumerate}
  \item \emph{Douglas--Rachford} in C++ -- Our implementation is outlined in Section~\ref{ssec:successformulation}. Our experiments were performed using both the normal Douglas--Rachford method (DR) and our variant (DR+Proj).
  \item \emph{Gurobi Binary Program}\footnote{Gurobi Sudoku model: \url{http://www.gurobi.com/documentation/5.5/example-tour/node155}} -- Solves a binary integer program formulation using \emph{Gurobi Optimizer 5.5}. The formulation is the same $n\times n\times n$ binary array model used in the Douglas--Rachford implementation. Our experiments were performed using the default settings, and the default settings with the pre-solver off.
  \item \emph{YASS}\footnote{YASS: \url{http://yasudokusolver.sourceforge.net/}} (Yet Another Sudoku Solver)  in C++ -- Solves the Sudoku problem in two phases. In the first phase, a \emph{reasoning algorithm} determines the possible candidates for each of the empty Sudoku squares. If the Sudoku is not completely solved, the second phase uses a deterministic recursive algorithm.
  \item \emph{DLX}\footnote{DLX: \url{http://cgi.cse.unsw.edu.au/~xche635/dlx_sodoku/}} in C -- Solves an exact cover formulation using the \emph{Dancing Links} implementation of Knuth's \emph{Algorithm X} -- a  non-deterministic, depth-first, backtracking algorithm.
 \end{enumerate}

Since YASS and DLX were only designed to be applied to $9\times 9$ instances, their performances on \texttt{ksudoku16} and \texttt{ksudoku25} were unable to be included in the comparison.

\subsubsection{Computational Results}
Table~\ref{tab:testlib1} shows a comparison of the time taken by each of the methods in Section~\ref{sssec:alg}, applied to the test libraries of Section~\ref{sssec:lib}. Computations were performed on an Intel Core i5-3210 {@} 2.50GHz running 64-bit Ubuntu~12.10. For each Sudoku puzzle, $10$ replications were performed. We make some general comments about the results.

\begin{itemize}
 \item All methods easily solved instances from \texttt{reglib-1.3} -- the test library consisting of puzzles suited to human-style techniques. Since human-style technique usually avoid excessive use of `trial-and-error', less backtracking is required to solve puzzle aimed at human players. Since all of the algorithms, except the Douglas--Rachford method, utilize some form of backtracking, this may explain the observed good performance.
 \item The Gurobi binary program performed best amongst the methods, regardless of the test library. Of the methods tested, the Gurobi Optimizer is the most sophisticated. Whether or not the pre-solver was used did not significantly effect computational time.
 \item Our Douglas--Rachford implementation outperformed YASS on \texttt{top95}, \texttt{top1465} and DLX on \texttt{minimal1000}. For all other algorithm/test library combinations, the Douglas--Rachford was competitive. The performance of the normal Douglas--Rachford method appears slightly better than the variant which includes the additional projection step.
 \item The Douglas--Rachford solved Sudoku puzzles with a high success rate -- no lower than $84\%$ for any of the test libraries. For most test libraries the success rate was much higher (see Table~\ref{tab:testlib2}). Puzzles solved by the method were typically done so in the first $2000$ iterations (see Figure~\ref{fig:sudokudist}).
\end{itemize}

 \begin{sidewaystable*}
 \caption{Mean (Max) time in seconds over all instances.} \label{tab:testlib1}
 \begin{center}
 \begin{tabular}{lcccccc} \hline\noalign{\smallskip}
                         & top95          & top1465        & reglib-1.3    & minimal1000    & ksudoku16      & ksudoku25  \\ \hline\noalign{\smallskip}
   DR                    & 1.432 (6.056)  & 0.929 (6.038)  & 0.279 (5.925) & 0.509 (5.934)  & 5.064 (30.079) & 4.011 (24.627) \\
   DR+Proj               & 1.894 (6.038)  & 1.261 (12.646) & 0.363 (6.395) & 0.953 (5.901)  & 6.757 (31.949) & 8.608 (84.190) \\
   Gurobi (default)      & 0.063 (0.095)  & 0.063 (0.171)  & 0.059 (0.123) & 0.063 (0.091)  & 0.168 (0.527)  & 0.401 (0.490) \\
   Gurobi (pre-solve off) & 0.077 (0.322)  & 0.076 (0.405)  & 0.058 (0.103) & 0.064 (0.104)  & 0.635 (4.621)  & 0.414 (0.496) \\
   YASS                  & 2.256 (58.822) & 1.440 (113.195)& 0.039 (3.796) & 0.654 (61.405) & - & - \\
   DLX                   & 1.386 (38.466) & 0.310 (34.179) & 0.105 (8.500) & 3.871 (60.541) & - & - \\ \hline
 \end{tabular}
 \end{center}

 \bigskip\bigskip\bigskip

 \caption{\% of Sudoku instances successfully solved.} \label{tab:testlib2}
 \begin{center}
 \begin{tabular}{lcccccc} \hline\noalign{\smallskip}
                         & top95          & top1465        & reglib-1.3    & minimal1000    & ksudoku16      & ksudoku25  \\ \hline\noalign{\smallskip}
   DR                    & 86.53 & 93.69 & 99.35 & 99.59 & 92.00 & 100 \\
   DR+Proj               & 85.47 & 93.93 & 99.31 & 99.59 & 84.67 & 100 \\ \hline
 \end{tabular}
 \end{center}
 \end{sidewaystable*}

\begin{sidewaysfigure*}
  \begin{subfigure}{0.5\textwidth}
  \includegraphics[width=\textwidth]{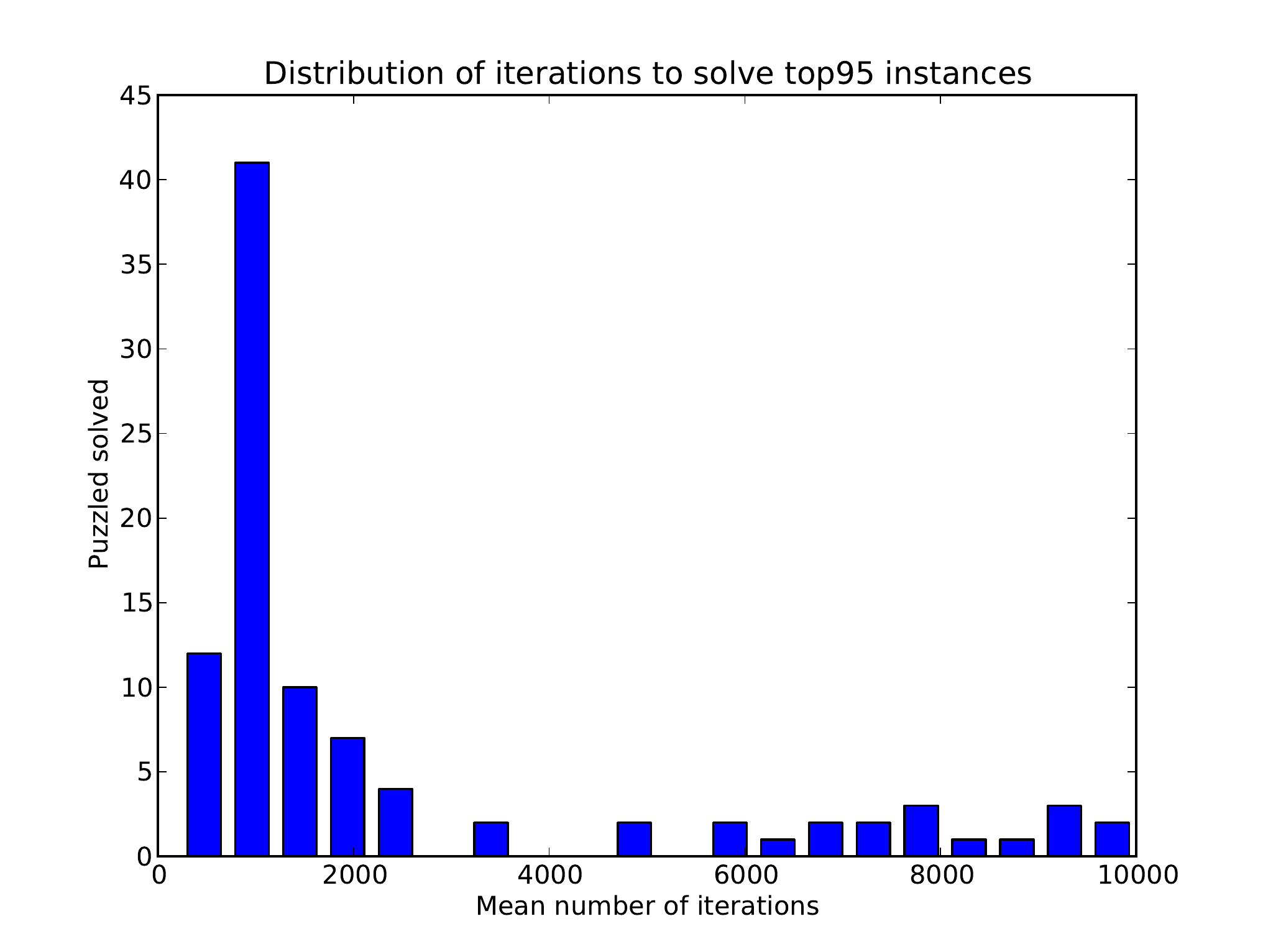}
  \end{subfigure}
  \begin{subfigure}{0.5\textwidth}
  \includegraphics[width=\textwidth]{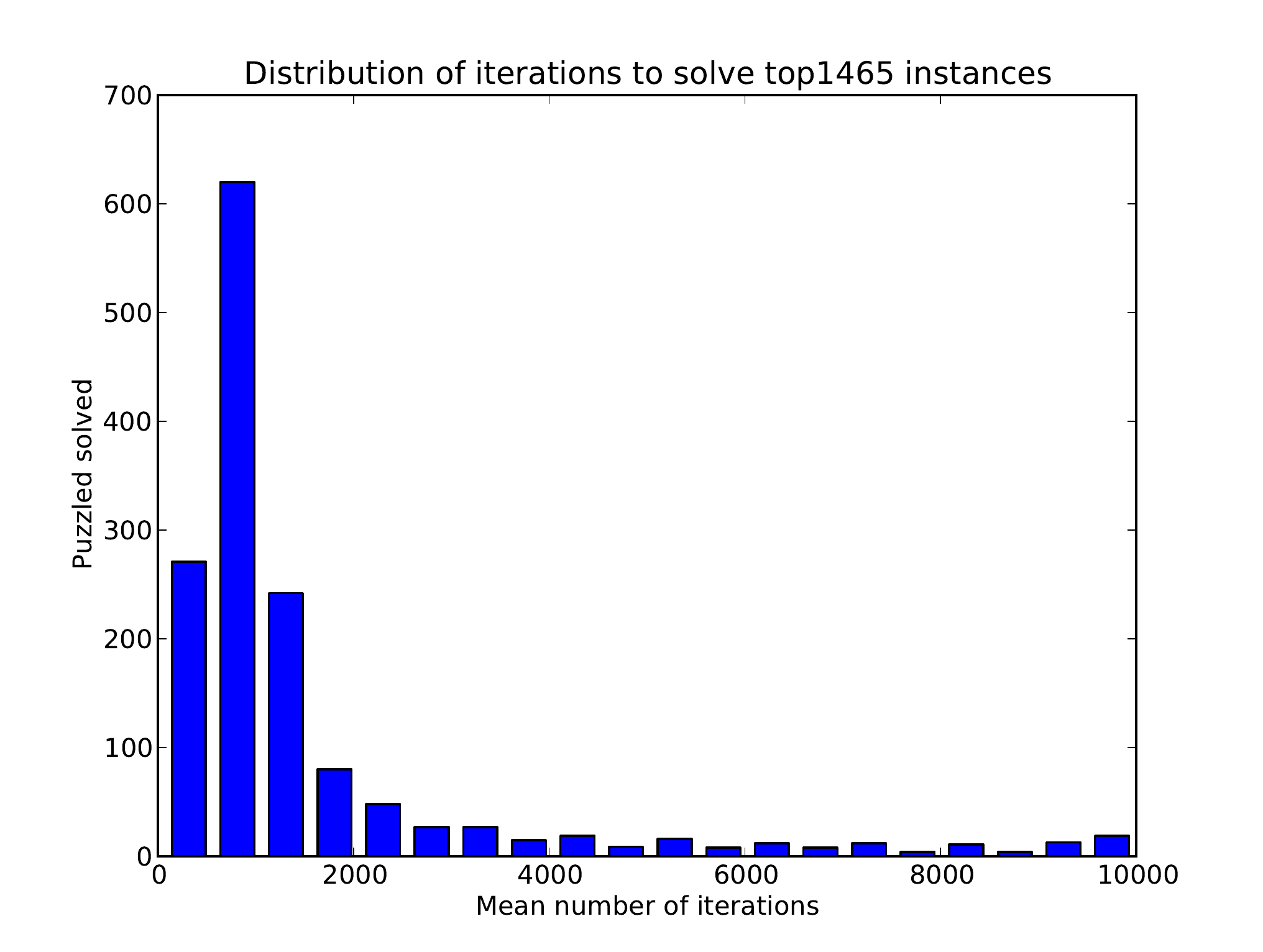}
  \end{subfigure}
  \begin{subfigure}{0.5\textwidth}
  \includegraphics[width=\textwidth]{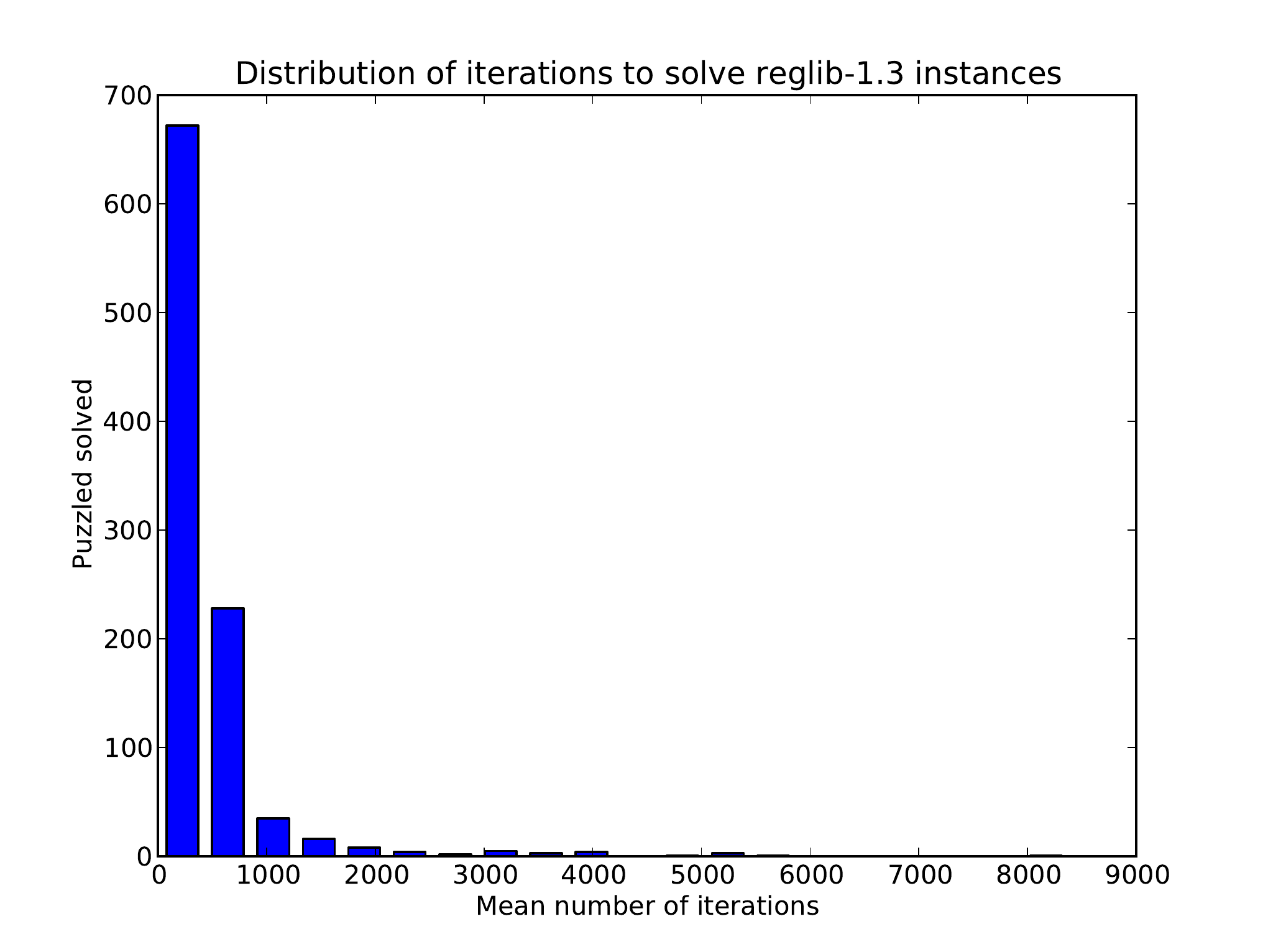}
  \end{subfigure}
  \begin{subfigure}{0.5\textwidth}
  \includegraphics[width=\textwidth]{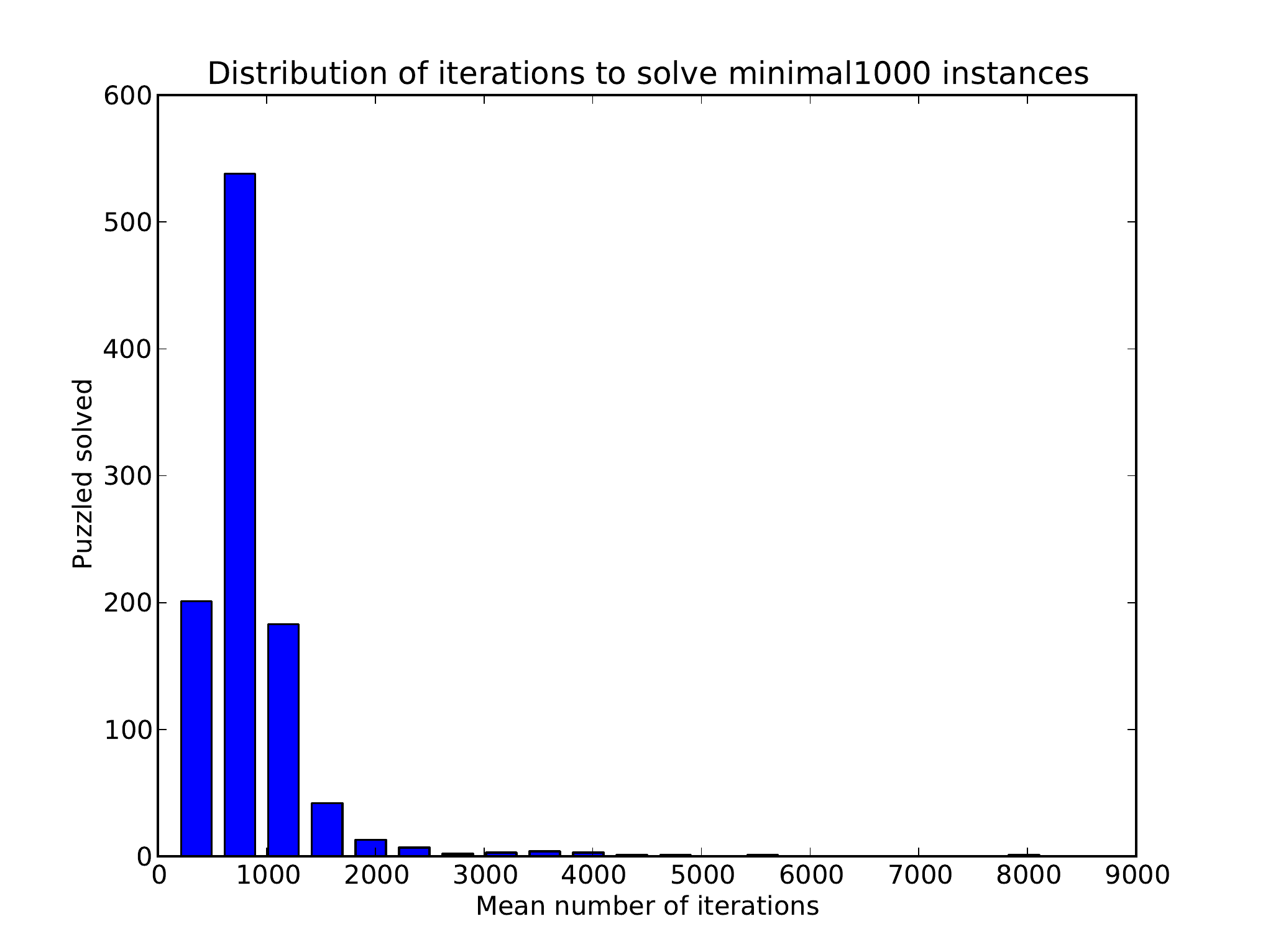}
  \end{subfigure}
  \caption{Frequency histograms showing the distribution of puzzles solved by number of iterations for the Douglas--Rachford method.} \label{fig:sudokudist}
\end{sidewaysfigure*}

\subsection{Models that failed}

To our surprise, the integer formulation of Section~\ref{ssec:int} was ineffective, except for $4\times 4$ Sudoku, while the binary reformulation of the cyclic Douglas--Rachford method described in Section~\ref{sssec:cdr} also failed in both the original space and the product space.

Clearly we have a lot of work to do to understand the model characteristics which lead  to success and those which lead to failure.

We should also like to understand how to diagnose infeasibility in Sudoku via the binary model. This would give a full treatment of Sudoku as a NP-complete problem.

\subsection{A `nasty' Sudoku puzzle and other challenges}
The incomplete Sudoku on the left of Figure \ref{fig:nasty}
has proven intractable for Douglas--Rachford. The unique solution is shown at the right of Figure \ref{fig:nasty}. As set, it can not be solved by Jason Schaad's Douglas--Rachford based Sudoku solver,\footnote{Schaad's web-based solver: \url{https://people.ok.ubc.ca/bauschke/Jason/}} nor can it be solved reliably by our implementation.


\setlength{\sudokusize}{6cm}
\renewcommand*\sudokuformat[1]{\Large\sffamily #1}
\begin{figure*}
    \begin{center}
	\scalebox{0.94}{\begin{tabular}{cc}
	\begin{minipage}{0.5\textwidth}
		\begin{sudoku}
		 |7| | | | |9| |5| |.
		 | |1| | | | | |3| |.
		 | | |2|3| | |7| | |.
		 | | |4|5| | | |7| |.
		 |8| | | | | |2| | |.
		 | | | | | |6|4| | |.
		 | |9| | |1| | | | |.
		 | |8| | |6| | | | |.
		 | | |5|4| | | | |7|.
		\end{sudoku}
	\end{minipage}
	&
	\begin{minipage}{0.5\textwidth}
		\begin{sudoku}
		 |7|{\color{red}4}|{\color{red}3}|{\color{red}8}|{\color{red}2}|9|{\color{red}1}|5|{\color{red}6}|.
		 |{\color{red}5}|1|{\color{red}8}|{\color{red}6}|{\color{red}4}|{\color{red}7}|{\color{red}9}|3|{\color{red}2}|.
		 |{\color{red}9}|{\color{red}6}|2|3|{\color{red}5}|{\color{red}1}|7|{\color{red}4}|{\color{red}8}|.
		 |{\color{red}6}|{\color{red}2}|4|5|{\color{red}9}|{\color{red}8}|{\color{red}3}|7|{\color{red}1}|.
		 |8|{\color{red}7}|{\color{red}9}|{\color{red}1}|{\color{red}3}|{\color{red}4}|2|{\color{red}6}|{\color{red}5}|.
		 |{\color{red}3}|{\color{red}5}|{\color{red}1}|{\color{red}2}|{\color{red}7}|6|4|{\color{red}8}|{\color{red}9}|.
		 |{\color{red}4}|9|{\color{red}6}|{\color{red}7}|1|{\color{red}5}|{\color{red}8}|{\color{red}2}|{\color{red}3}|.
		 |{\color{red}2}|8|{\color{red}7}|{\color{red}9}|6|{\color{red}3}|{\color{red}5}|{\color{red}1}|{\color{red}4}|.
		 |{\color{red}1}|{\color{red}3}|5|4|{\color{red}8}|{\color{red}2}|{\color{red}6}|{\color{red}9}|7|.
		\end{sudoku}
	\end{minipage}
	\\
	\end{tabular}}
	\end{center}
	\caption{The `nasty' Sudoku (left), and its unique solution (right).}\label{fig:nasty}
\end{figure*}




We decided to ask: What happens when we remove one entry from the `nasty' Sudoku? From one hundred random initializations:
\begin{itemize}
 \item Removing the top-left entry, a ``$7$", the puzzle was still difficult for the Douglas--Rachford algorithm: we had a 24\% success rate ---  comparable to the `nasty' Sudoku without any entries removed.
 \item If any other single entry was removed, the problem could be solved \emph{fairly} reliably: we had a 99\% success rate.
\end{itemize}

For each of the puzzles with an entry removed, the number of distinct solution was determined using \emph{SudokuSolver},\footnote{SudokuSolver: \url{http://infohost.nmt.edu/tcc/help/lang/python/examples/sudoku/}}
and are reported in Table~\ref{tab:numsoln}. Those with an entry removed, that could be reliably solved all have many solutions --- anywhere from a few hundred to a few thousand; while the puzzle with the top-left entry removed has relatively few --- only five.\footnote{For the five solutions: \url{http://carma.newcastle.edu.au/DRmethods/comb-opt/nasty_nonunique.txt}} It is possible that this structure that makes the `nasty' Sudoku difficult to solve, with the Douglas--Rachford algorithm hindered by an abundance of `near' solutions.

We then asked: What happens when entries from the solution are added to incomplete `nasty' Sudoku? From one hundred random starts:
\begin{itemize}
 \item If any single entry was added, the Sudoku could be solved more often, but not reliably: we had only a 54\% success rate.
\end{itemize}

\begin{table*}
 \begin{center}
 \caption{Number of instances solved from 1000 replications.}\label{tab:hardsudoku2}
 \begin{tabular}{lcc} \hline\noalign{\smallskip}
                        & AI escargot   & `Nasty' \\ \hline\noalign{\smallskip}
  DR                    & 985           & 202 \\
  DR+Proj               & 975           & 172 \\ \hline
 \end{tabular}
 \end{center}
\end{table*}


We also examined how the binary Douglas--Rachford method applied to this `nasty' Sudoku behaves relative to its behaviour on other hard problems (see Table~\ref{tab:hardsudoku2}). Specially, we considered \emph{AI escargot}, a Sudoku purposely designed by Arto Inkala to be really difficult. Our Douglas--Rachford implementation could solve AI escargot \emph{fairly} reliably: we had a success rate of 99\%. In contrast to the `nasty' Sudoku, the number of solutions to AI escargot with one entry removed was no more than a few hundred; typically much less.

\begin{figure*}[t]
 \begin{center}
  \includegraphics[width=0.65\textwidth]{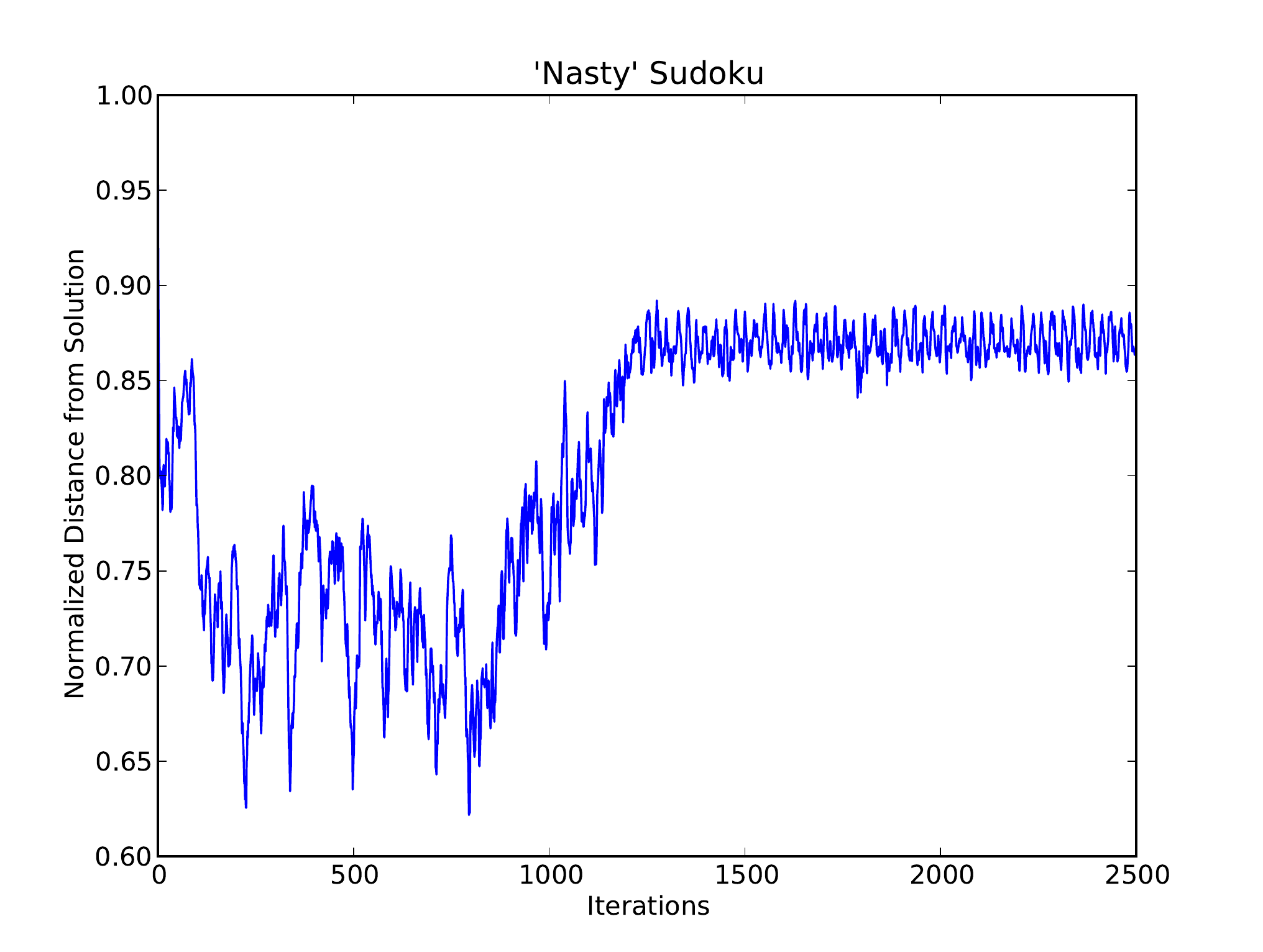}
 \end{center}
 \caption{Typical behaviour of the Douglas--Rachford algorithm applied to the `nasty' Sudoku, modelled as a zero-one program.}\label{fig:sudoku1}
\end{figure*}

\begin{figure*}
 \begin{center}
  \includegraphics[width=0.65\textwidth]{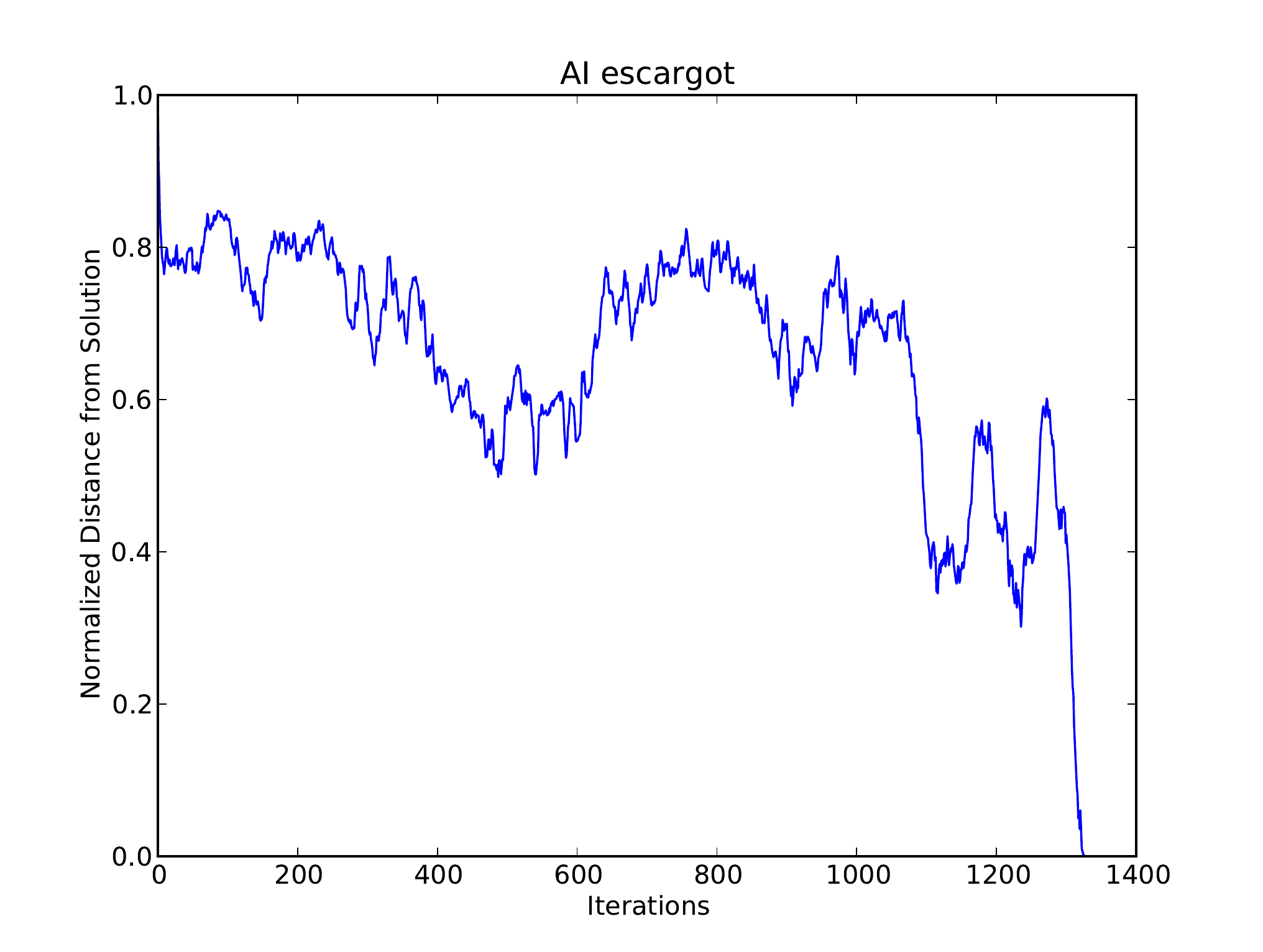}
 \end{center}
 \caption{Typical behaviour of the Douglas--Rachford algorithm for AI escargot, modelled as a zero-one program.}\label{fig:sudoku2}
\end{figure*}

We then asked the question: How does the distances from the solution vary as a function of the number of iterations? This is plotted in Figures~\ref{fig:sudoku1} and \ref{fig:sudoku2}, for the `nasty' Sudoku and AI escargot, respectively.\footnote{If $\mathbf x_n$ is the current iterate, $\mathbf x^\ast$ the solution, and $m=\max_n\|P_D \mathbf x_n-\mathbf x^\ast\|$, ${\|P_D\mathbf x_n- \mathbf x^\ast\|/m}$ is plotted against $n$.} The same for each of the five solution to the `nasty' Sudoku, with the top-left entry removed, is shown in Figure~\ref{fig:sudoku-nonunique}.

\begin{table*}
\begin{center}
\caption{Number of distinct solutions for the `nasty' Sudoku with a single entry removed.}\label{tab:numsoln}
\begin{tabular}{cc} \hline
 Entry removed & Distinct solutions \\ \hline\noalign{\smallskip}
 None          & $1$ \\
 $S[1,1]$      & $5$ \\
 $S[1,6]$      & $571$ \\
 $S[1,8]$      & $2528$ \\
 $S[2,2]$      & $874$  \\
 $S[2,8]$      & $1504$ \\
 $S[3,3]$      & $2039$ \\
 $S[3,4]$      & $1984$ \\
 $S[3,7]$      & $182$ \\
 $S[4,3]$      & $2019$ \\
 $S[4,4]$      & $3799$ \\
 $S[4,8]$      & $1263$ \\ \hline
 \end{tabular}
 \hfill
 \begin{tabular}{cc} \hline
 Entry removed &  Distinct solutions \\  \hline\noalign{\smallskip}
 $S[5,1]$      & $216$ \\
 $S[5,7]$      & $2487$ \\
 $S[6,6]$      & $476$ \\
 $S[6,7]$      & $1315$ \\
 $S[7,2]$      & $1905$ \\
 $S[7,5]$      & $966$ \\
 $S[8,2]$      & $711$ \\
 $S[8,5]$      & $579$ \\
 $S[9,3]$      & $1278$\\
 $S[9,4]$      & $1368$ \\
 $S[9,9]$      & $1640$ \\
 & \\\hline
\end{tabular}
\end{center}
\end{table*}

In what follows, denote by $(\mathbf x_n)$ the sequence of iterates obtained from the Douglas--Rachford algorithm, and by $\mathbf x^\ast$ the Sudoku solution obtained from ($\mathbf x_n$). In contrast to the convex setting, Figures~\ref{fig:sudoku1} and \ref{fig:sudoku2} show that the sequence $(\|\mathbf x_n-\mathbf x^\ast\|)$ need not be monotone decreasing.

In the convex setting, $(\mathbf x_n)$ is known to have the very useful property of being \emph{Fej\'er monotone} with respect to $\Fix T_{D,C}$. That is,
 $$\|\mathbf x_{n+1}-c\|\leq \|\mathbf x_n-c\|\text{ for any }c\in\Fix T_{D,C}.$$
When $(\mathbf x_n)$ converged to a solution, $\|\mathbf x_n-x^\ast\|$ decreased rapidly just before the solution was found (see Figure~\ref{fig:sudoku2}). This seemed to occur regardless of the behaviour of earlier iterations. Perhaps this behaviour is due to the Douglas--Rachford iterate entering a local basin of attraction.




 \begin{figure*}
  \begin{center}
   \includegraphics[width=0.75\textwidth]{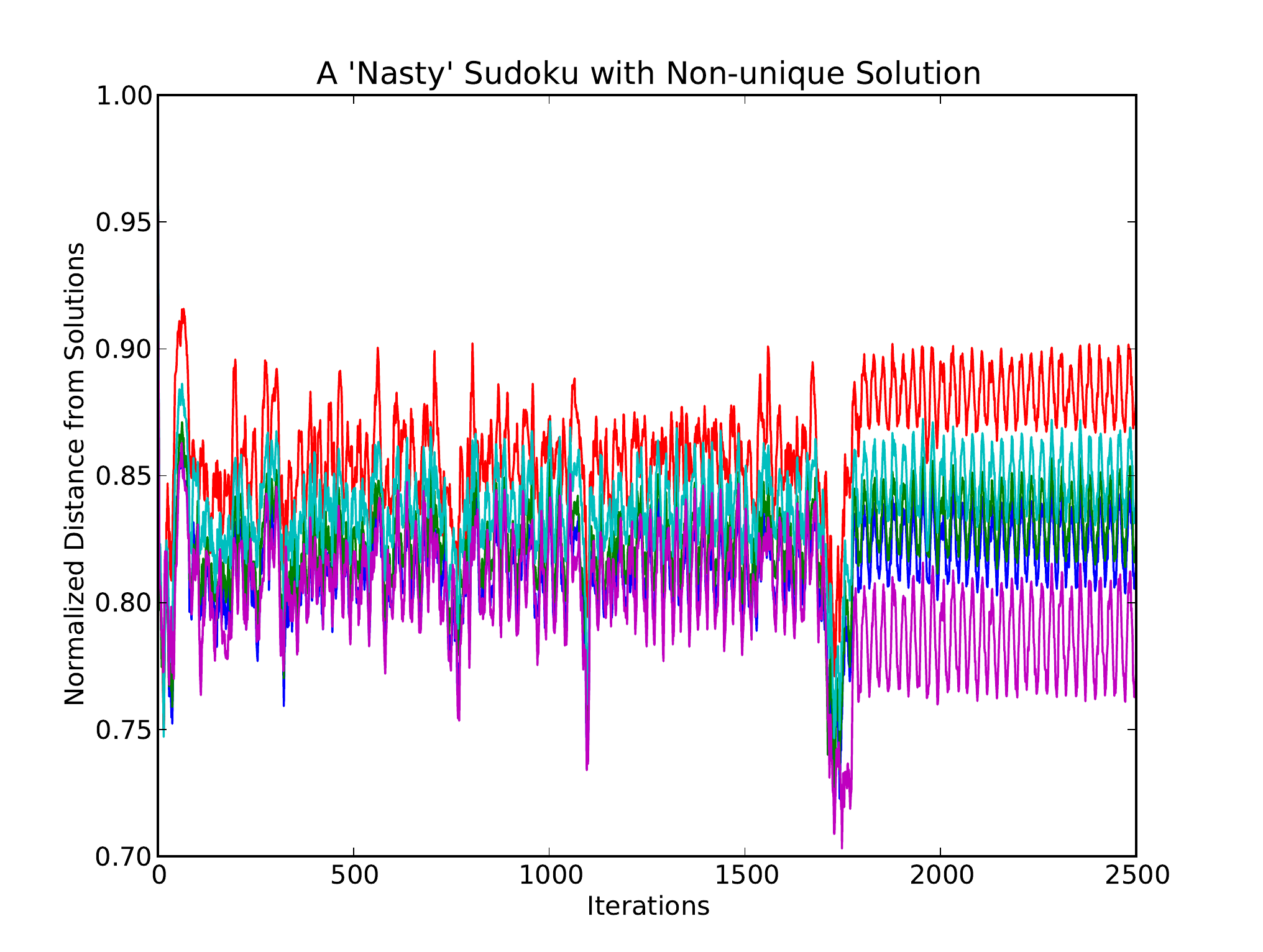}
   \caption{Typical behaviour of Douglas--Rachford  applied to  `nasty' Sudoku with  top-left entry removed. The five colors represent the possible solutions.}\label{fig:sudoku-nonunique}
  \end{center}
 \end{figure*}
 
\begin{table*}
 \begin{center}
 \caption{Mean (Max) Time in second from 1000 replications.} \label{tab:hardsudoku1}
 \begin{tabular}{lcc} \hline\noalign{\smallskip}
                        & AI escargot   & `Nasty' \\ \hline\noalign{\smallskip}
  DR                    & 1.232 (6.243) &  4.840 (6.629) \\
  DR+Proj               & 1.623 (6.074) &  5.312 (7.689) \\
  Gurobi (default)      & 0.157 (0.845) & 0.111 (0.125) \\
  Gurobi (pre-solve off) & 0.094 (0.153) & 0.253 (0.365) \\
  YASS                  & 0.162 (0.255) & 12.370 (13.612)\\
  DLX                   & 0.020 (0.032) & 0.110 (0.126) \\ \hline
 \end{tabular}
 \end{center}
\end{table*}

The methods Section~\ref{sssec:alg}, applied to the two difficult Sudoku puzzles, were also compared (see Table~\ref{tab:hardsudoku1}). While all solved AI escargot easily, applied to the `nasty' Sudoku, YASS was significantly slower -- the Douglas--Rachford method is not the only algorithm to find the puzzle difficult.



\section{Solving Nonograms}\label{sec:vis}
Recall that a nonogram puzzle consists of a blank $m\times n$ grid of pixels (the canvas) together with $(m+n)$ cluster-size sequences, one for each row and each column \cite{bosch2001painting}. The goal is to paint the canvas with a picture that satisfies the following constraints:
\begin{itemize}
 \item Each pixel must be black or white.
 \item If a row (resp. column) has cluster-size sequence $s_1,s_2,\dots,s_k$ then it must contain $k$ clusters of black pixels, separated by at least one white pixel, such that the $i$th leftmost (resp. uppermost) cluster contains $s_i$ black pixels.
\end{itemize}
An example of a nonogram puzzle is given in Figure~\ref{fig:happy}. Its solution, found by the Douglas--Rachford algorithm, is shown in Figure~\ref{fig:happy_sol}.

\begin{figure*}
  \begin{center}
     \scalebox{0.9}{\bf
\tabcolsep=0.05cm
\renewcommand{\arraystretch}{0.15}
\begin{tabular}{*{2}{|>{\centering\arraybackslash}m{2.5ex}}|m{2.5ex}*{10}{|>{\centering\arraybackslash}m{2.5ex}}|} \cline{4-13}
\multicolumn{3}{c|}{}   &     &     &     &     &     &     &  1  &     &     &    \\ [2.5ex] \cline{4-13}
\multicolumn{3}{c|}{}   &     &     &     &  2  &     &     &  4  &  1  &  2  &  \raisebox{-1.85ex}{2} \\ [2.5ex] \cline{4-13}
\multicolumn{3}{c|}{}   &  2  &  3  &  1  &  1  &  5  &  4  &  1  &  5  &  2  &  \raisebox{-1.85ex}{1} \\ [2.5ex] \cline{4-13}
\noalign{\vspace{5ex}}                                                                         \cline{1-2} \cline{4-13}
1 & 2                  &&     &     &     &     &     &     &     &     &     &     \\ [2.5ex] \cline{1-2} \cline{4-13}
  & 2                  &&     &     &     &     &     &     &     &     &     &     \\ [2.5ex] \cline{1-2} \cline{4-13}
  & 1                  &&     &     &     &     &     &     &     &     &     &     \\ [2.5ex] \cline{1-2} \cline{4-13}
  & 1                  &&     &     &     &     &     &     &     &     &     &     \\ [2.5ex] \cline{1-2} \cline{4-13}
  & 2                  &&     &     &     &     &     &     &     &     &     &     \\ [2.5ex] \cline{1-2} \cline{4-13}
2 & 4                  &&     &     &     &     &     &     &     &     &     &     \\ [2.5ex] \cline{1-2} \cline{4-13}
2 & 6                  &&     &     &     &     &     &     &     &     &     &     \\ [2.5ex] \cline{1-2} \cline{4-13}
  & 8                  &&     &     &     &     &     &     &     &     &     &     \\ [2.5ex] \cline{1-2} \cline{4-13}
1 & 1                  &&     &     &     &     &     &     &     &     &     &     \\ [2.5ex] \cline{1-2} \cline{4-13}
2 & 2                  &&     &     &     &     &     &     &     &     &     &     \\ [2.5ex] \cline{1-2} \cline{4-13}
\end{tabular}

}
  \end{center}
  \caption{A nonogram  whose solution can be found by Douglas--Rachford, see Figure~\ref{fig:happy_sol}.  Cluster-size sequences for each row and column are given.} \label{fig:happy}
\end{figure*}

\begin{figure*}[t]
  \begin{center}
	  \begin{subfigure}{0.3\textwidth}
	    \includegraphics[width=\textwidth]{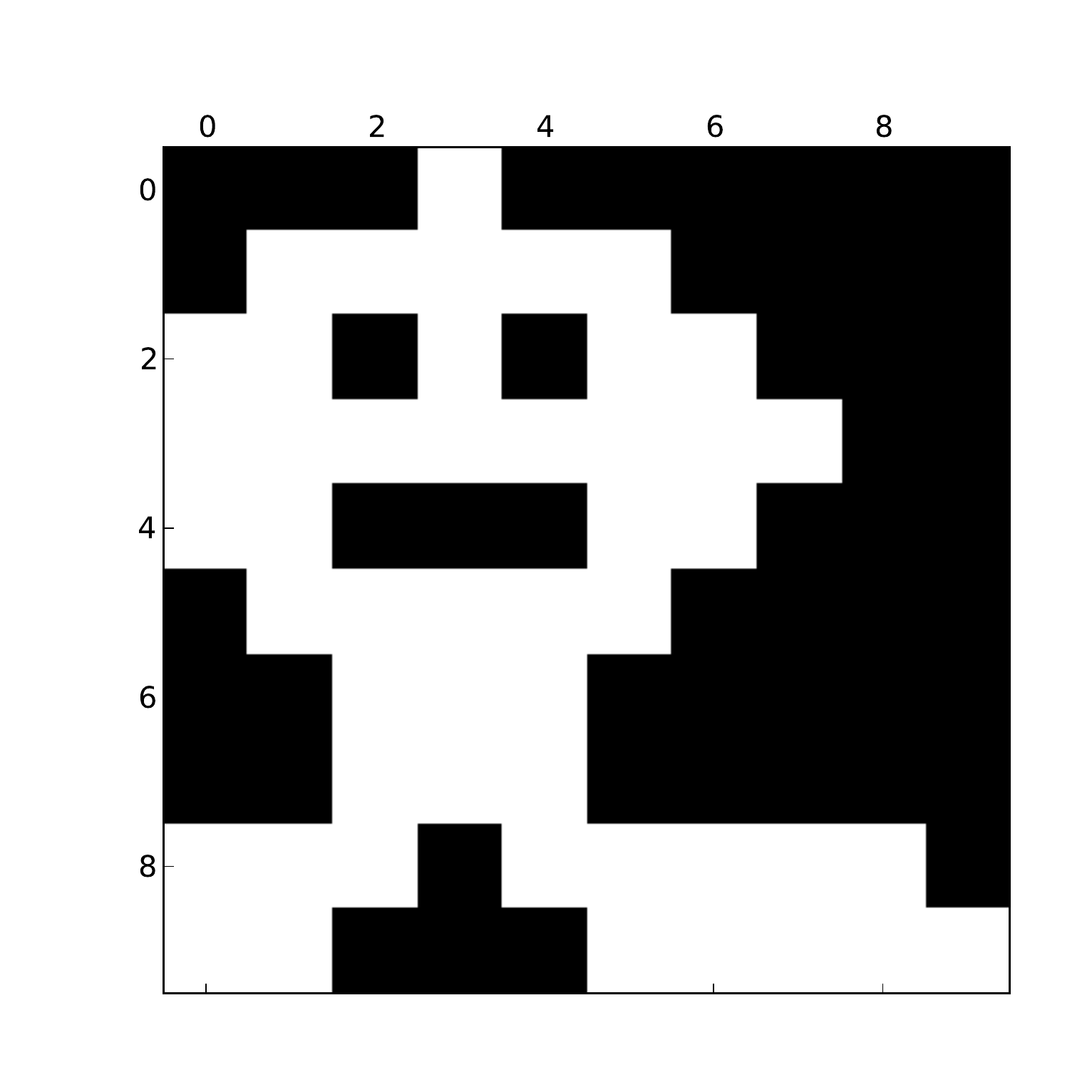}
	    \caption{A spaceman.}
	  \end{subfigure}
	  \begin{subfigure}{0.3\textwidth}
         \includegraphics[width=\textwidth]{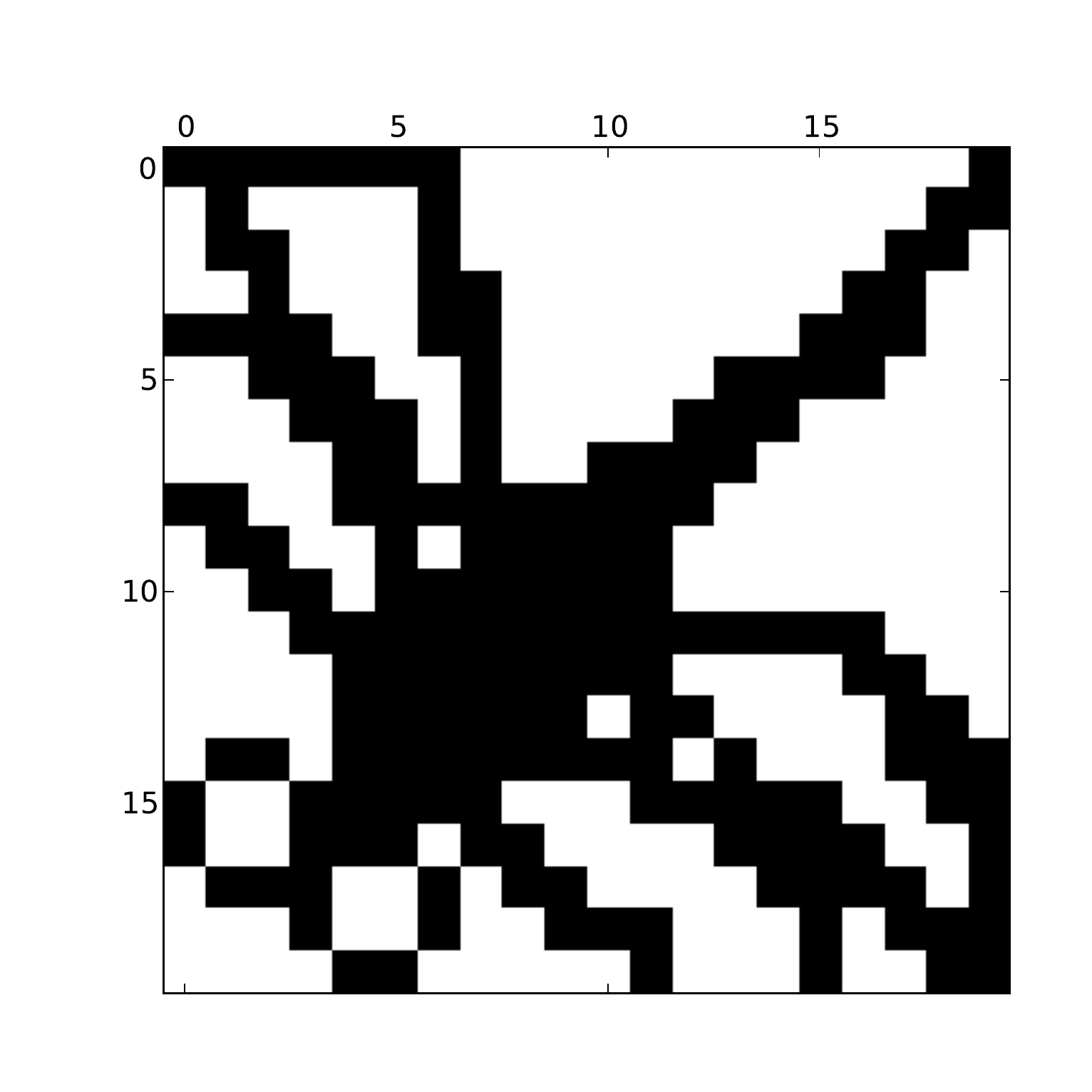}
         \caption{A dragonfly.}
	  \end{subfigure}
	  \begin{subfigure}{0.3\textwidth}
         \includegraphics[width=\textwidth]{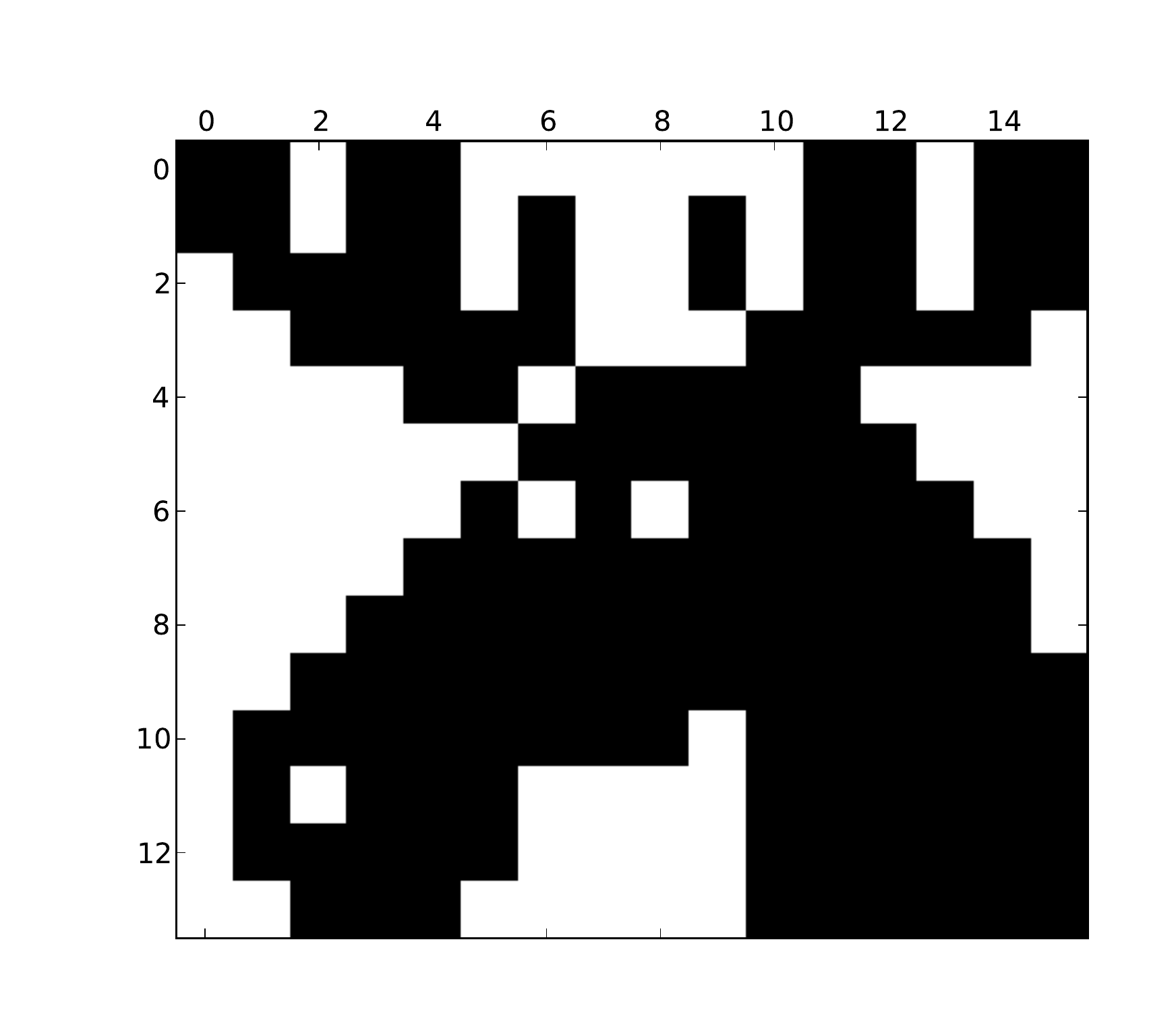}
         \caption{A moose.}
	  \end{subfigure}
	  \begin{subfigure}{0.3\textwidth}
         \includegraphics[width=\textwidth]{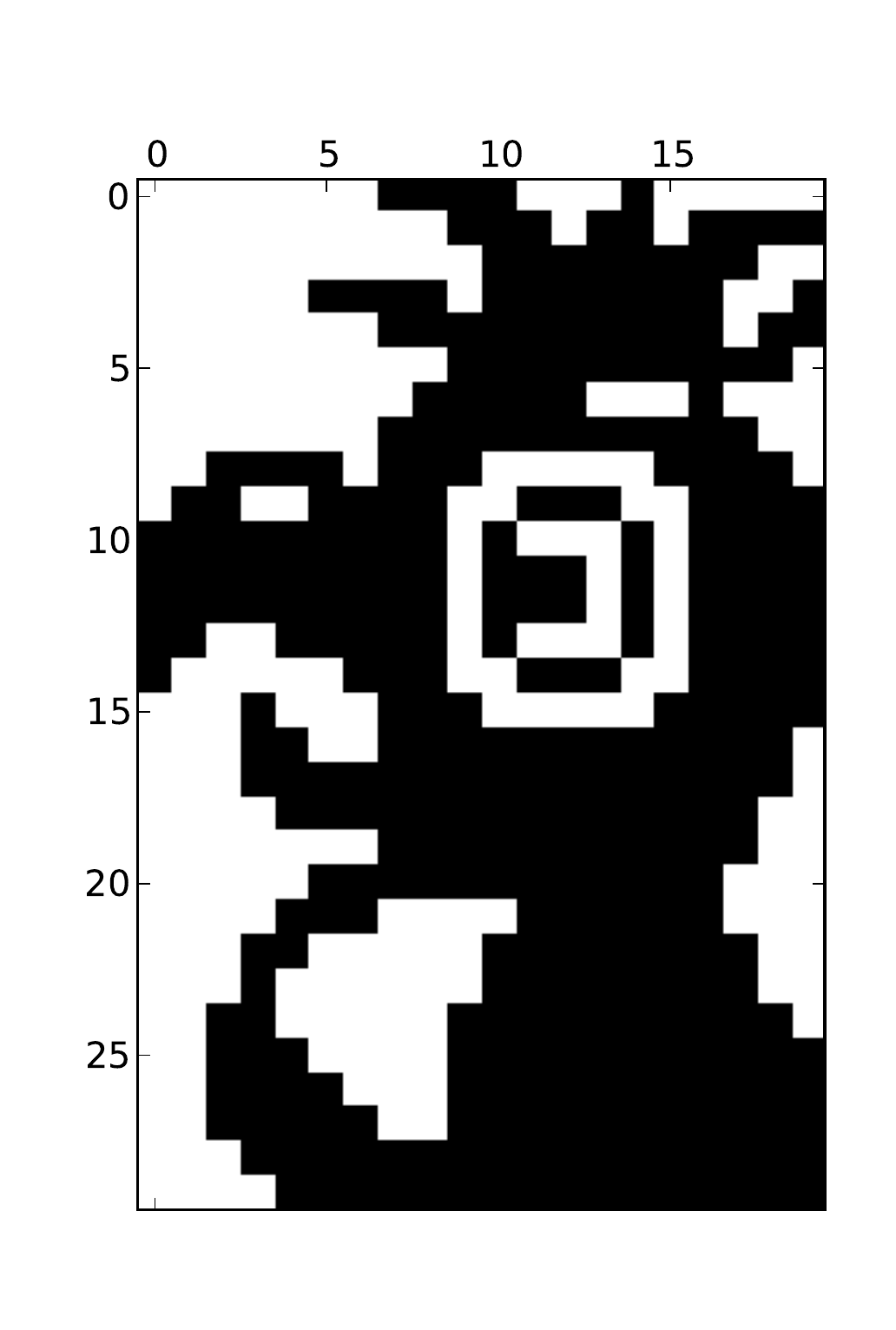}
         \caption{A parrot.}
	  \end{subfigure}
	  \begin{subfigure}{0.3\textwidth}
         \includegraphics[width=\textwidth]{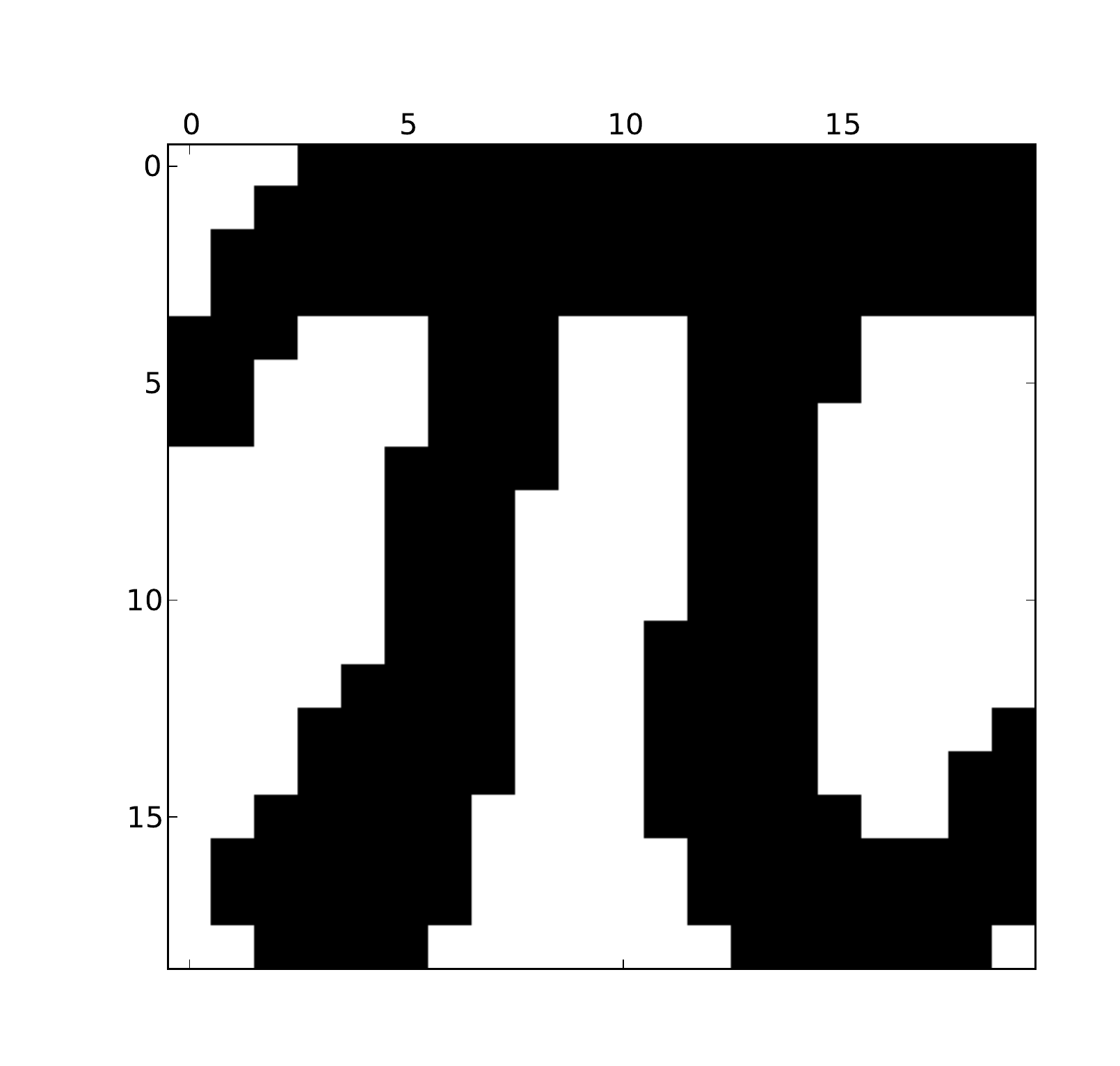}
         \caption{The number $\pi$.}
	  \end{subfigure}
	  \begin{subfigure}{0.3\textwidth}
         \includegraphics[width=\textwidth]{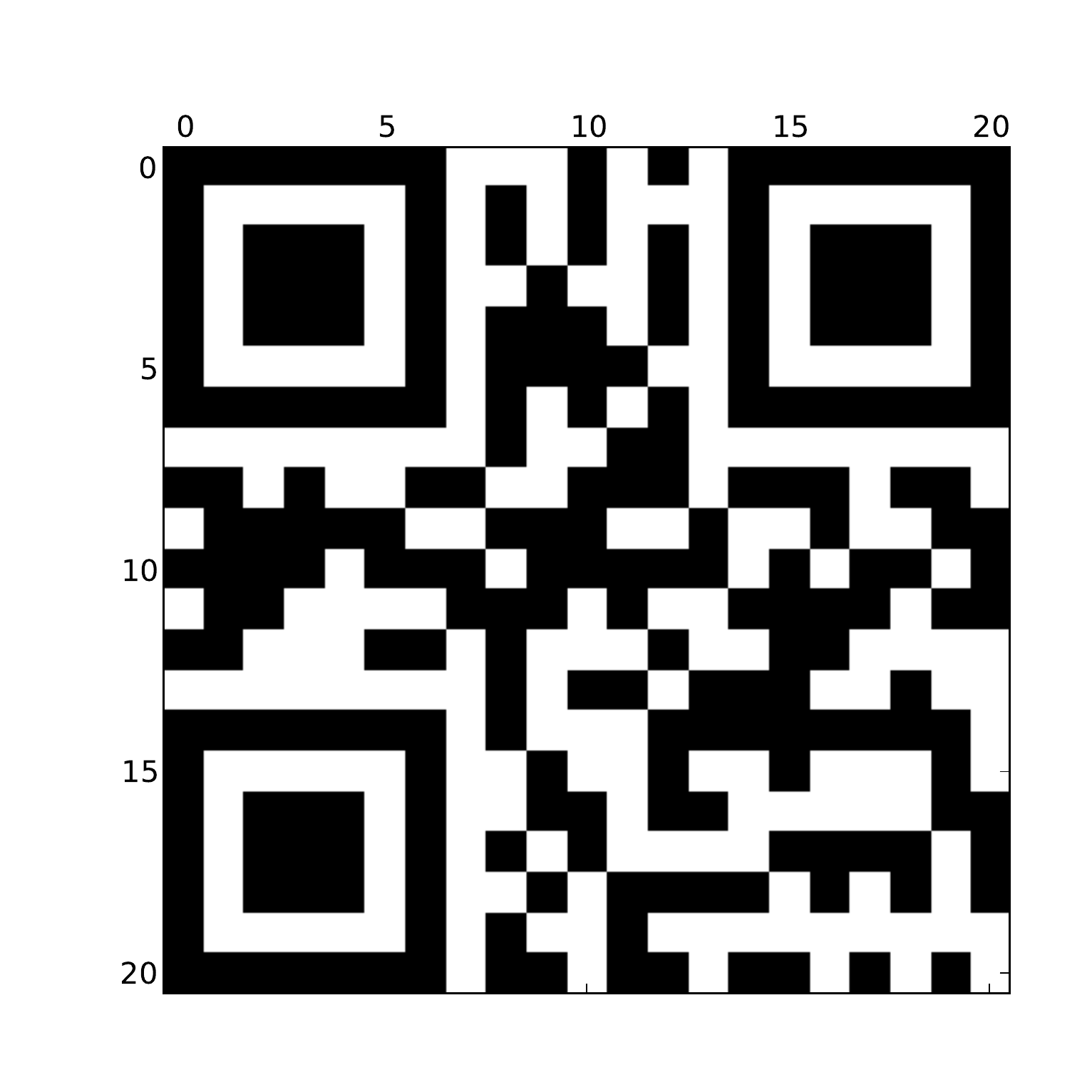}
         \caption{``Hello from CARMA" encoded as a QR code.\footnotemark}
	  \end{subfigure}	  	
  \end{center}
  \caption{Solutions to six nonograms found by the Douglas--Rachford algorithm.}\label{fig:6nonograms}
\end{figure*}

We model nonograms as a binary feasibility problem. The $m\times n$ grid is represented as a matrix $A\in\R^{m\times n}$. We define
 $$A[i,j] = \left\{\begin{array}{ll}
             0 & \text{if the }(i,j)\text{-th entry of the grid is white}, \\
             1 & \text{if the }(i,j)\text{-th entry of the grid is black}. \\
            \end{array}\right.$$


Let $\mathcal R_i\subset\R^m$ (resp. $\mathcal C_j\subset\R^n$) denote the set of vectors having cluster-size sequences matching row $i$ (resp. column $j$).
\begin{align*}
 C_1 &= \{A: A[i,:]\in\mathcal R_i\text{ for }i=1,\dots,m\}, \\
 C_2 &= \{A: A[:,j]\in\mathcal C_j\text{ for }j=1,\dots,n\}.
\end{align*}
Given an incomplete nonogram puzzle, $A$ is a solution if and only if
 $$A\in C_1\cap C_2.$$

We investigated the viability of the Douglas--Rachford method to solve nonogram puzzles, by testing the algorithm on seven puzzles: the puzzle in Figure~\ref{fig:happy}, and the six puzzles shown in Figure~\ref{fig:6nonograms}. Our implementation, written in Python, is, appropriately modified, the same as the method of Section~\ref{ssec:successformulation}.

\footnotetext{\emph{QR (quick response) codes} are two-dimensional bar codes originally designed for use in the Japanese automobile industry. Their data is typically encoded in either numerical, alphanumerical, or binary formats.}

Applied to nonograms, the Douglas--Rachford algorithm is highly successful. From 1000 random initializations, all puzzles considered were solved with a 100\% success rate.

Within this model, a difficulty is that the projections onto $C_1$ and $C_2$ have no simple form. So far, our attempts to find an efficient method to do so have been unsuccessful. Our current implementation pre-computes $\mathcal R_i$ and $\mathcal C_j$, for all indices $i,j$, and at each iteration chooses the nearest point by computing the distance to each point in the appropriate set.

For nonograms with large canvases, the enumeration of $\mathcal R_i$ and $\mathcal C_j$ becomes intractable. However, the Douglas--Rachford iterations themselves are fast.

\begin{remark}[Performance on NP-complete problems]  We note that for Sudoku, the computation of projections is easy but the typical number of (easy) iterative steps large---as befits an NP complete problem. By contrast for nonograms, the number of steps is very small but an exponential amount of work is presumably buried in computing the projections. \qede\end{remark}

\begin{figure*}[]
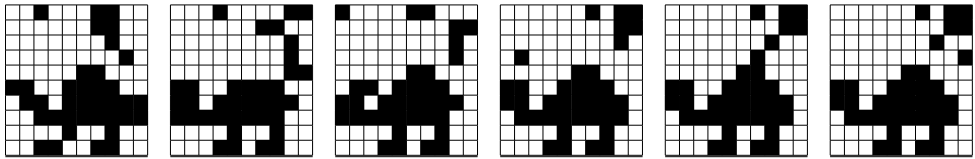

  \begin{center}
    \scalebox{0.22}{\begin{tabular}{*{10}{|>{\centering\arraybackslash}m{2.5ex}}|}  \hline
     &     & \cb &     &     &     & \cb & \cb &     &     \\ [2.5ex] \hline
     &     &     &     &     &     & \cb & \cb &     &     \\ [2.5ex] \hline
     &     &     &     &     &     &     & \cb &     &     \\ [2.5ex] \hline
     &     &     &     &     &     &     &     & \cb &     \\ [2.5ex] \hline
     &     &     &     &     & \cb & \cb &     &     &     \\ [2.5ex] \hline
 \cb & \cb &     &     & \cb & \cb & \cb & \cb &     &     \\ [2.5ex] \hline
     & \cb & \cb &     & \cb & \cb & \cb & \cb & \cb & \cb \\ [2.5ex] \hline
     &     & \cb & \cb & \cb & \cb & \cb & \cb & \cb & \cb \\ [2.5ex] \hline
     &     &     &     & \cb &     &     & \cb &     &     \\ [2.5ex] \hline
     &     & \cb & \cb &     &     & \cb & \cb &     &     \\ [2.5ex] \hline
 \hline
\end{tabular}}\hspace{4pt}
    \scalebox{0.22}{\begin{tabular}{*{10}{|>{\centering\arraybackslash}m{2.5ex}}|}  \hline
     &     &     & \cb &     &     &     &     & \cb & \cb \\ [2.5ex] \hline
     &     &     &     &     &     & \cb & \cb &     &     \\ [2.5ex] \hline
     &     &     &     &     &     &     &     & \cb &     \\ [2.5ex] \hline
     &     &     &     &     &     &     &     & \cb &     \\ [2.5ex] \hline
     &     &     &     &     &     &     &     & \cb & \cb \\ [2.5ex] \hline
 \cb & \cb &     &     & \cb & \cb & \cb & \cb &     &     \\ [2.5ex] \hline
 \cb & \cb &     & \cb & \cb & \cb & \cb & \cb & \cb &     \\ [2.5ex] \hline
 \cb & \cb & \cb & \cb & \cb & \cb & \cb & \cb &     &     \\ [2.5ex] \hline
     &     &     &     & \cb &     &     & \cb &     &     \\ [2.5ex] \hline
     &     &     & \cb & \cb &     & \cb & \cb &     &     \\ [2.5ex] \hline
 \hline
\end{tabular}}\hspace{4pt}
    \scalebox{0.22}{\begin{tabular}{*{10}{|>{\centering\arraybackslash}m{2.5ex}}|}  \hline
 \cb &     &     &     &     & \cb & \cb &     &     &     \\ [2.5ex] \hline
     &     &     &     &     &     &     &     & \cb & \cb \\ [2.5ex] \hline
     &     &     &     &     &     &     &     & \cb &     \\ [2.5ex] \hline
     &     &     &     &     &     &     &     & \cb &     \\ [2.5ex] \hline
     &     &     &     &     & \cb & \cb &     &     &     \\ [2.5ex] \hline
     & \cb & \cb &     & \cb & \cb & \cb & \cb &     &     \\ [2.5ex] \hline
 \cb & \cb &     & \cb & \cb & \cb & \cb & \cb & \cb &     \\ [2.5ex] \hline
 \cb & \cb & \cb & \cb & \cb & \cb & \cb & \cb &     &     \\ [2.5ex] \hline
     &     &     &     & \cb &     &     & \cb &     &     \\ [2.5ex] \hline
     &     &     & \cb & \cb &     & \cb & \cb &     &     \\ [2.5ex] \hline
 \hline
\end{tabular}}\hspace{4pt}
    \scalebox{0.22}{\begin{tabular}{*{10}{|>{\centering\arraybackslash}m{2.5ex}}|}  \hline
     &     &     &     &     &     & \cb &     & \cb & \cb \\ [2.5ex] \hline
     &     &     &     &     &     &     &     & \cb & \cb \\ [2.5ex] \hline
     &     &     &     &     &     &     &     & \cb &     \\ [2.5ex] \hline
     & \cb &     &     &     &     &     &     &     &     \\ [2.5ex] \hline
     &     &     &     &     & \cb & \cb &     &     &     \\ [2.5ex] \hline
 \cb & \cb &     &     & \cb & \cb & \cb & \cb &     &     \\ [2.5ex] \hline
 \cb & \cb &     & \cb & \cb & \cb & \cb & \cb & \cb &     \\ [2.5ex] \hline
     & \cb & \cb & \cb & \cb & \cb & \cb & \cb & \cb &     \\ [2.5ex] \hline
     &     &     &     & \cb &     &     & \cb &     &     \\ [2.5ex] \hline
     &     &     & \cb & \cb &     & \cb & \cb &     &     \\ [2.5ex] \hline
 \hline
\end{tabular}}\hspace{4pt}
    \scalebox{0.22}{\begin{tabular}{*{10}{|>{\centering\arraybackslash}m{2.5ex}}|}  \hline
     &     &     &     &     &     & \cb &     & \cb & \cb \\ [2.5ex] \hline
     &     &     &     &     &     &     &     & \cb & \cb \\ [2.5ex] \hline
     &     &     &     &     &     &     & \cb &     &     \\ [2.5ex] \hline
     &     &     &     &     &     & \cb &     &     &     \\ [2.5ex] \hline
     &     &     &     &     & \cb & \cb &     &     &     \\ [2.5ex] \hline
 \cb & \cb &     &     & \cb & \cb & \cb & \cb &     &     \\ [2.5ex] \hline
 \cb & \cb &     & \cb & \cb & \cb & \cb & \cb & \cb &     \\ [2.5ex] \hline
     & \cb & \cb & \cb & \cb & \cb & \cb & \cb & \cb &     \\ [2.5ex] \hline
     &     &     &     & \cb &     &     & \cb &     &     \\ [2.5ex] \hline
     &     &     & \cb & \cb &     & \cb & \cb &     &     \\ [2.5ex] \hline
 \hline
\end{tabular}}\hspace{4pt}
    \scalebox{0.22}{\begin{tabular}{*{10}{|>{\centering\arraybackslash}m{2.5ex}}|}  \hline
     &     &     &     &     &     & \cb &     & \cb & \cb \\ [2.5ex] \hline
     &     &     &     &     &     &     &     & \cb & \cb \\ [2.5ex] \hline
     &     &     &     &     &     &     & \cb &     &     \\ [2.5ex] \hline
     &     &     &     &     &     &     &     &     & \cb \\ [2.5ex] \hline
     &     &     &     &     & \cb & \cb &     &     &     \\ [2.5ex] \hline
 \cb & \cb &     &     & \cb & \cb & \cb & \cb &     &     \\ [2.5ex] \hline
 \cb & \cb &     & \cb & \cb & \cb & \cb & \cb & \cb &     \\ [2.5ex] \hline
     & \cb & \cb & \cb & \cb & \cb & \cb & \cb & \cb &     \\ [2.5ex] \hline
     &     &     &     & \cb &     &     & \cb &     &     \\ [2.5ex] \hline
     &     &     & \cb & \cb &     & \cb & \cb &     &     \\ [2.5ex] \hline
 \hline
\end{tabular}}
  \end{center}
  \caption{Solution to the nonogram in Figure~\ref{fig:happy} found by Douglas--Rachford in six iterations: showing the projection onto $C_1$ of these six iterations.} \label{fig:happy_sol}
\end{figure*}

\section{Conclusion}\label{sec:conc}

The message of the list in Section \ref{ssec:list} and of the previous two sections is the following.   When presented with a new combinatorial feasibility problem it is well worth seeing if Douglas--Rachford can deal with it---it is conceptually very simple and is usually relatively easy to implement. It would be interesting to apply Douglas--Rachford to various other classes of matrix-completion problem \cite{matrix}.

Moreover, this approach allows for the intuition developed in Euclidean space to be usefully repurposed. This lets one profitably consider non-expansive fixed point methods in the class of CAT(0) metric spaces --- a far ranging concept introduced twenty years ago in algebraic topology but now finding applications to optimization and fixed point algorithms.
The convergence of various projection type algorithms to feasible points is under investigation by Searston and Sims among others in such spaces \cite{cat}: thereby broadening the constraint structures to which projection-type algorithms apply to include metrically rather than only algebraically convex sets.

Weak convergence of project-project-average has been established  \cite{cat}. Reflections have been shown to be well defined in those CAT(0) spaces with extensible geodesics and curvature bounded below \cite{cat2}. Examples have been constructed to show that unlike in Hilbert spaces they need not be nonexpansive unless the space has constant curvature \cite{cat2}. None-the-less it appears that the basic Douglas--Rachford algorithm (reflect-reflect-average) may continue to converge in fair generality.

\bigskip

Many resources can be found at the paper's companion website:\\
\centerline{\url{http://carma.newcastle.edu.au/DRmethods/comb-opt/}}

\vfill
\paragraph{Acknowledgements} We wish to thank Heinz Bauschke, Russell Luke, Ian Searston and Brailey Sims for many useful insights. Example~\ref{ex:3setdr} was provided by Brailey Sims.

\footnotesize

 \bibliographystyle{plain}

\end{document}